\DeclareMathAlphabet{\mathcal}{OMS}{cmsy}{m}{n}
\SetMathAlphabet{\mathcal}{bold}{OMS}{cmsy}{b}{n}
\theoremstyle{plain}
\newtheorem{theorem}{Theorem}[section]
\newtheorem{lemma}[theorem]{Lemma}
\newtheorem{prop}[theorem]{Proposition}
\newtheorem{cor}[theorem]{Corollary}
\newtheorem{obs}[theorem]{Observation}
\theoremstyle{definition}
\newtheorem{remark}{Remark}
\begin{document}

\title{A two-player voting game in Euclidean space}
 
\author{Stelios Stylianou\footnote{School of Mathematics, University of Bristol. Supported by an EPSRC Doctoral Training Studentship.}}
\date{November 2025}
\maketitle

\begin{sloppypar}

\begin{abstract}
Given a finite set $S$ of points in $\mathbb{R}^d$, which we regard as the locations of voters on a $d$-dimensional political `spectrum', two candidates (Alice and Bob) select one point in $\mathbb{R}^d$ each, in an attempt to get as many votes as possible. Alice goes first and Bob goes second, and then each voter simply votes for the candidate closer to them in terms of Euclidean distance. If a voter's distance from the two candidates is the same, they vote for nobody. We give a geometric characterization of the sets $S$ for which each candidate wins, assuming that Alice wins if they get an equal number of votes. We also show that, if not all the voters lie on a single line, then, whenever Alice has a winning strategy, there is a unique winning point for her. We also provide an algorithm which decides whether Alice has a winning point, and determines the location of that point, both in finite (in fact polynomial) time.
\end{abstract}

\section{Introduction}

Let $d$ be a positive integer, and let $S$ be a finite set of distinct points in $\mathbb{R}^d$, which we regard as the locations of voters. We will sometimes use the term $\emph{voters}$ to refer to elements of $S$. Let $\rho(x,y) \coloneqq \norm{x-y}_2 = ({\sum_{i=1}^{d}(x_i-y_i)^2})^{1/2}$ denote the Euclidean distance between $x,y \in \mathbb{R}^d$. Two players, Alice and Bob, who will also be referred to as the $\emph{candidates}$, play the following game: Alice chooses some point $a \in \mathbb{R}^d$. Bob knows $a$, and chooses some point $b \in \mathbb{R}^d$. The $\emph{scores}$ of Alice and Bob are given by $V_A \coloneqq \abs{\{ x \in S: \rho(a,x) < \rho(b,x) \}}$ and $V_B \coloneqq \abs{\{ x \in S: \rho(b,x) < \rho(a,x) \}}$, respectively. In other words, each voter will vote for the candidate closer to them in terms of Euclidean distance, and any voter that is equidistant from the two candidates will not vote. We say that a candidate $\emph{claims}$ the point $x \in S$ if $x$ votes for them.

We assume that both Alice and Bob execute perfect strategies, so `Alice/Bob wins' will have the same meaning as `Alice/Bob has a winning strategy'. Note that Bob can ensure that $V_B=V_A$ by choosing the same point as Alice did (both scores will be equal to zero in this case). We will therefore say that Alice wins if and only if $V_A \geq V_B$, which will also allow us to assume that Bob always chooses a different point from Alice. 

Note that the problem is trivial when $d=1$: let $S=\{ x_1, \dots, x_n\}$, where $x_1< \dots <x_n$. Alice can always win by picking the median value of $S$. More precisely, if $n$ is odd, then Alice wins if and only if she picks $a=x_{(n+1)/2}$, and, if $n$ is even, then Alice wins if and only if she picks some $a \in [x_{n/2}, x_{n/2+1}]$. This (one-dimensional) result is referred to as the $\emph{median voter theorem}$. A version of this result was first observed by Hotelling \cite{Hotelling} in 1929, and it was more formally stated by Black \cite{Black} in 1948.

Part of the motivation behind solving this problem in higher dimensions arises from the fact that, in political science, the positions of voters and candidates are often represented by points in a two-dimensional space (or higher-dimensional space), with each dimension corresponding to a different issue. For example, the horizontal and vertical axes of a two-dimensional model often represent opinions on economic policy and social policy, respectively, on a scale from `left-wing' to `right-wing', or `liberal' to `conservative'. A third axis could, for example, correspond to opinions on foreign policy. See \cite{enelow1984spatial} and \cite{Unified} for a discussion of prior work in social choice theory, on spatial voting models. It is also worth mentioning that researchers in the social sciences sometimes attempt to identify the position of potential voters in a two-dimensional space (e.g.\ with a view to comparing them with the approximate positions of political parties), by using a questionnaire such as \cite{electoralcalculus2025}. In the economics literature, a similar two-player game is often used to model the behaviour of two vendors competing for market-share, though there it is more usual to model the market/customers as a continuum of points rather than a finite set of points (e.g., as a unit line segment, a unit circle, a unit square or a unit disc), and to allow the two vendors to place more than one vending outlet in the space, either alternately or with one vendor placing all of her outlets, followed by the other vendor placing all of his. Similar games with more than two vendors have also been studied. We refer the reader to \cite{linial} and the references therein, for several interesting results in this setting.

In this paper, we will provide a geometric characterization of the finite sets $S$ for which each candidate wins, for all $d \geq 2$. Our main result will distinguish between the case where $S$ consists of an odd number of voters, and the case where it consists of an even number of voters, but we will first look at a simple necessary and sufficient condition for Alice to win that works for both odd-sized and even-sized sets.

\begin{theorem}
\label{general_easy}
    Alice wins if and only if there exists some point $x \in \mathbb{R}^d$ such that any affine hyperplane through $x$ contains at most half the points of $S$ on either (strict) side. Moreover, provided that not all voters lie on a single line, Alice cannot have two distinct winning points.
\end{theorem}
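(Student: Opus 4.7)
The plan is to first prove the biconditional by reducing it to a statement about open half-spaces through $a=x$, and then to prove uniqueness by combining the hyperplane condition at two distinct winning points. For sufficiency, take Alice's play $a=x$ satisfying the condition and let Bob play any $b \neq x$. Every voter strictly closer to $b$ than to $x$ satisfies $(v-x)\cdot(b-x) > \norm{b-x}_2^2/2 > 0$, so Bob's voters form a subset of the open half-space $\{v : (v-x)\cdot(b-x) > 0\}$; the condition at $x$ applied to the hyperplane through $x$ with normal $b-x$ bounds this by $|S|/2$, giving $V_B \leq |S|/2$. Conversely, every voter in the complementary closed half-space $\{v : (v-x)\cdot(b-x) \leq 0\}$ is strictly closer to $x$, so $V_A \geq |S|/2 \geq V_B$. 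For necessity I would argue contrapositively: if the condition at $x$ fails in some direction $n$, Bob plays $b = x + \varepsilon n$ with $\varepsilon > 0$ small enough that the perpendicular bisector of $x,b$ still keeps every voter with $(v-x)\cdot n > 0$ on Bob's side; then $V_B > |S|/2 > V_A$, so $x$ is not a winning play.

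For uniqueness, suppose $x_1 \neq x_2$ are both winning points, so both satisfy the geometric condition, and let $L$ be the line through them. The crucial step is a slab-emptiness claim: for every direction $u$ with $u\cdot x_1 < u\cdot x_2$, the open slab $\{v : u\cdot x_1 < u\cdot v < u\cdot x_2\}$ contains no voter of $S$. Indeed, the condition at $x_1$ applied to the hyperplane through $x_1$ with normal $u$ gives $|\{v : u\cdot v > u\cdot x_1\}| \leq |S|/2$, and the condition at $x_2$ gives $|\{v : u\cdot v < u\cdot x_2\}| \leq |S|/2$; these two open half-spaces cover all of $\mathbb{R}^d$, so inclusion--exclusion bounds their intersection (the slab) by $|S|/2 + |S|/2 - |S| = 0$. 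To finish, I would show that any voter $v_0$ off $L$ lies in the slab for some such $u$: translating so $x_1 = 0$ and decomposing $v_0 = \alpha x_2 + w$ with $w \neq 0$ and $w \perp x_2$, the direction $u = x_2/\norm{x_2}_2 + \beta w/\norm{w}_2$ gives $u\cdot x_2 = \norm{x_2}_2 > 0$ and $u\cdot v_0 = \alpha\norm{x_2}_2 + \beta\norm{w}_2$, which can be made to lie strictly between $0$ and $\norm{x_2}_2$ by choosing $\beta$ in an open interval of length $\norm{x_2}_2/\norm{w}_2 > 0$. This contradicts slab-emptiness, so $S \subseteq L$, contradicting the hypothesis.

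Both directions of the biconditional are routine once one identifies the correct hyperplane (through $x$ with normal $b-x$) to which the condition should be applied. The main obstacle is the uniqueness part: the nonobvious idea is to apply the condition at $x_1$ and $x_2$ on parallel hyperplanes in an \emph{arbitrary} direction $u$, not only along the $x_2-x_1$ axis, and then to tilt $u$ transversally to catch any off-line voter inside the resulting slab. Both steps are short once found, but neither is immediate from the definition of a winning point.
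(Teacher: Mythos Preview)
Your proof is correct and follows essentially the same approach as the paper's. For the biconditional both you and the paper use the hyperplane through $a$ with normal $b-a$ and then let Bob approach $a$ along the normal direction; for uniqueness both arguments trap an off-line voter strictly between two parallel hyperplanes through $x_1$ and $x_2$ and use the half-space bounds at each point to force the intervening slab to be empty. The only difference is organizational: the paper fixes the off-line voter $y$ first and then asserts the existence of a suitable common normal direction, whereas you first prove slab-emptiness for every direction $u$ and then explicitly construct (via the decomposition $v_0=\alpha x_2+w$ and the tilt parameter $\beta$) a direction placing $v_0$ inside the slab. Your version is slightly more explicit at the step the paper leaves to geometric intuition, but the underlying idea is identical.
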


\begin{proof}
    Suppose there exists some point $x$ that satisfies the condition of the theorem, and Alice chooses $a=x$. Then, whatever choice of $b$ Bob makes, we can think of $a$ as being the projection of $b$ onto some affine hyperplane $H$ through $a$. Since at most half the points of $S$ lie on the same side of $H$ as $b$, and Alice claims all the remaining points, she wins.

    Conversely, if no such point $x$ exists, then, whatever the choice of $a$ is, Bob can find some affine hyperplane $H$ through $a$ that contains more than half the points of $S$ on one side. We can assume, without loss of generality, that $a$ is the origin, $H= \{ x \in \mathbb{R}^d:x_1=0 \}$, and $H^+ = \{ x \in \mathbb{R}^d : x_1>0\}$ contains more than half the points of $S$. Then Bob wins by choosing $b=(\beta,0,\dots,0)$, for some sufficiently small $\beta>0$.

    Now suppose that not all voters lie on a single line. Assume that $a_1$ and $a_2$ are two distinct winning points for Alice, and let $l$ be the line through these two points. Fix some $y \in S$ that does not belong to $l$. We can find affine hyperplanes $H_1$ and $H_2$, containing $a_1$ and $a_2$, respectively, such that the normal vector to $H_1$ is the same as the normal vector to $H_2$, and $y$ lies strictly between $H_1$ and $H_2$. Without loss of generality, we can take $H_1 = \{ x \in \mathbb{R}^d:x_1=0 \}$, and $H_2 = \{ x \in \mathbb{R}^d:x_1= \alpha \}$, for some $\alpha>0$. Let $k_1=\abs{\{x \in S: x_1 \leq 0 \}}$, $k_2=\abs{\{x \in S: x_1 \geq \alpha \}}$, and $k_3=\abs{\{x \in S: 0<x_1<\alpha \}}$. Then $k_1+k_2+k_3=\abs{S}$. Since $a_1$ is winning for Alice, we know that $k_2+k_3 \leq \abs{S}/2$, and, since $a_2$ is also winning for Alice, we know that $k_1+k_3 \leq \abs{S}/2$. Adding these two inequalities gives $k_1+k_2+2k_3 \leq \abs{S}$, so $k_3 = 0$, a contradiction since $y$ satisfies $0<x_1<\alpha$.
\end{proof}

\begin{remark}
    This implies that, when $\abs{S}$ is odd, Alice can only hope to win by choosing $a$ to be some point of $S$. When $\abs{S}$ is even, Bob can find an affine hyperplane containing $a$ and no other point of $S$. Then one of the two open half-spaces defined by $H$ will contain at least half the points of $S$, and Bob will be able to claim all those points, by following the strategy described in the second paragraph of the proof. Therefore, even if Bob is not allowed to choose $b=a$, he can always claim at least half of the votes when there is an even number of voters, which further justifies declaring Alice the winner if $V_A=V_B$. 
\end{remark}

Henceforth, we will always work with $d \geq 2$. Let $l$ be a line containing the point $x$, and let $l^+$ and $l^-$ be the two open half-lines defined by $x$ on $l$. We say that $l$ is:
\begin{itemize}
    \item \emph{Balanced (about $x$)}, if $\abs{\abs{l^+ \cap S} - \abs{l^- \cap S}} \leq 1$.
    \item \emph{Even-balanced (about $x$)}, if $l$ is balanced and $\abs{(l \setminus x) \cap S}$ is even.
    \item \emph{Odd-balanced (about $x$)}, if $l$ is balanced and $\abs{(l \setminus x) \cap S}$ is odd.
\end{itemize}

In other words, a line is even-balanced about $x$ if there is an equal number of voters on either side of $x$, whereas a line is odd-balanced about $x$ if the number of voters on one side differs to the number of voters on the other side by one. Note that, if $l\setminus x$ contains no points of $S$, then $l$ is trivially even-balanced about $x$. If $l$ is even-balanced, and $(l \setminus x) \cap S$ is non-empty, we will say that $l$ is a \emph{non-trivial} even-balanced line.

The set of vectors $\{e_1,\dots,e_d\}$ is used to denote the standard basis in $\mathbb{R}^d$. For $x,y \in \mathbb{R}^d$, we write $\langle x,y \rangle \coloneqq \sum_{i=1}^{d}x_iy_i$ for the standard inner product on $\mathbb{R}^d$. An \emph{affine hyperplane} in $\mathbb{R}^d$ is a set $H$ of the form $\{ x \in \mathbb{R}^d: \langle x,c \rangle = \lambda\}$, for some non-zero $c \in \mathbb{R}^d$, and some $\lambda \in \mathbb{R}$. For simplicity, we will usually use the term \emph{hyperplane} instead. For a hyperplane $H=\{ x \in \mathbb{R}^d: \langle x,c \rangle = \lambda\}$, let $H^+ \coloneqq \{ x \in \mathbb{R}^d: \langle x,c \rangle > \lambda \}$, and $H^- \coloneqq \{ x \in \mathbb{R}^d: \langle x,c \rangle < \lambda \}$. Note that this does not uniquely define $H^+$ and $H^-$: if we replace $c$ and $\lambda$ by $-c$ and $-\lambda$, respectively, then $H$ is the same, but the roles of $H^+$ and $H^-$ are reversed. However, this will not have any effect on our proofs.

When $\abs{S}$ is odd, we say that $H$ is:
\begin{itemize}
    \item \emph{Good}, if $\abs{H^+ \cap S} = \abs{H^- \cap S}$.
    \item \emph{Perfect}, if $H$ is good and $\abs{H \cap S} = 1$.
\end{itemize}
Note that a good hyperplane contains at least one point of $S$, since $\abs{S}$ is odd. Moreover, it is easy to see that at least one perfect hyperplane exists. Indeed, choose some non-zero $u \in \mathbb{R}^d$ such that the inner products $\langle u,y\rangle_{y \in S}$ are all distinct; then taking the median value of these inner products ($\lambda$, say) yields a perfect hyperplane $\{x \in \mathbb{R}^d:\ \langle x,u\rangle = \lambda\}$.

Our main result for odd-sized sets gives three necessary and sufficient conditions in order for Alice to win. The first two conditions are quite similar, as they refer to good hyperplanes and perfect hyperplanes through some point of $S$, respectively, whereas the third condition is the simplest one, as it only refers to lines through some point of $S$.

\begin{theorem}
\label{thm_odd_general}
If $\abs{S}$ is odd, then Alice wins if and only if there exists a point $x \in S$ that satisfies any one of the following equivalent conditions:

\begin{enumerate}[(i)]
    \item Every hyperplane through $x$ is good. \label{condition1_odd}
    \item Every perfect hyperplane contains $x$. \label{condition2_odd}
    \item Every line through $x$ is even-balanced. \label{condition3_odd}
\end{enumerate}
In this case, $a=x$ is a winning point for Alice.
\end{theorem}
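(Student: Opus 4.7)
My plan is to first show that condition (i) implies Alice wins at $a=x$, which is immediate from Theorem \ref{general_easy}: every good hyperplane through $x$ has $|H^+\cap S| = |H^- \cap S|$, and these sum to at most $|S|-1$ (as $x\in H \cap S$), so each side has at most $(|S|-1)/2 < |S|/2$ points of $S$. This also handles the ``moreover'' clause. For the converse, the remark after Theorem \ref{general_easy} forces any winning point to lie in $S$ when $|S|$ is odd, so it suffices to assume Alice wins at some $x \in S$ and deduce (i); it then remains to establish that (i), (ii), (iii) are pairwise equivalent.

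The heart of the proof is a single perturbation argument, which also handles (ii) $\Rightarrow$ (i). Given a hyperplane $H$ through $x$ with normal $n$, set $a=|H^+ \cap S|$, $b=|H^- \cap S|$, and $k=|(H \cap S) \setminus \{x\}|$, so $a+b+k=|S|-1$. For small $\epsilon>0$ and any direction $u \perp n$ that is generic (meaning $\langle y-x,u\rangle\neq 0$ for every $y \in (H\cap S)\setminus\{x\}$), the hyperplane $H_{\tilde n}$ through $x$ with normal $\tilde n = n+\epsilon u$ satisfies $|H_{\tilde n}^+ \cap S|=a+k^+(u)$, where $k^+(u) := |\{y \in (H\cap S)\setminus\{x\} : \langle y-x,u\rangle>0\}|$. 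The winning bound (which for $|S|$ odd reads $|H_{\tilde n}^\pm \cap S|\leq(|S|-1)/2$), applied at $H_{\tilde n}$ for both $u$ and $-u$ and combined with $k^+(u)+k^+(-u)=k$, forces $k$ to be even and $a=b=(|S|-1-k)/2$; hence $H$ is good. An identical argument gives (ii) $\Rightarrow$ (i), using that for generic $\tilde n$ the perfect hyperplane with normal $\tilde n$ must pass through $x$ by (ii), so $|\{y \in S : \langle y-x,\tilde n\rangle>0\}|=(|S|-1)/2$ exactly.

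For the remaining equivalences: (i) $\Rightarrow$ (ii) is by contradiction; if $H$ is a perfect hyperplane with, say, $x \in H^+$, then the parallel hyperplane $H'$ through $x$ has $|H'^-\cap S|\geq(|S|+1)/2$ (as $H'^-$ contains $H^-\cap S$ together with the unique point of $H\cap S$), and combining with (i) would push $|H'^+\cap S|+|H'^-\cap S|+1$ above $|S|$. For (i) $\Leftrightarrow$ (iii), I would group the points of $S\setminus\{x\}$ by direction from $x$, writing $m_v$ for the number of $y\in S$ with $y-x\in\mathbb{R}_{>0}\,v$; then (iii) reads $m_v=m_{-v}$ for all $v$, and one has $|H_n^+\cap S|=\sum_v m_v\,\mathbbm{1}[\langle v,n\rangle>0]$ (the sum is finite since only finitely many $v$ have $m_v>0$). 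The implication (iii) $\Rightarrow$ (i) is the substitution $v\mapsto-v$, while (i) $\Rightarrow$ (iii) follows by tracking how $|H_n^+\cap S|-|H_n^-\cap S|$ jumps by $\pm 2(m_{v_0}-m_{-v_0})$ as $n$ crosses the critical hyperplane $\{\langle v_0,\cdot\rangle=0\}$ in normal space, which must vanish because (i) makes this quantity identically zero.

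The main obstacle is the perturbation step, where the weak inequality in Theorem \ref{general_easy} must be sharpened into the equality demanded by (i). The essential trick is the $u\leftrightarrow -u$ symmetry: applying the bound simultaneously at $H_{n+\epsilon u}$ and $H_{n-\epsilon u}$ pins down the counts exactly rather than merely bounding them. Once this is in place, the remaining implications are relatively routine.
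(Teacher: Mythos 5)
Your proposal is correct, but its architecture differs from the paper's in a worthwhile way. The paper closes a cycle (i) $\Rightarrow$ Alice wins $\Rightarrow$ (ii) $\Rightarrow$ (i), proves (iii) $\Rightarrow$ win directly, and handles the necessity of (iii) by giving Bob an explicit strategy whenever some line through $x$ is not even-balanced, via Lemma \ref{lem_at_least_half} (Bob claims $l^+\cup H^+$). You avoid that lemma entirely: you apply the hyperplane bound of Theorem \ref{general_easy} pointwise at the winning point and sharpen the weak inequality to an equality by the $u\leftrightarrow -u$ symmetry, obtaining ``win at $x$ $\Rightarrow$ (i)'' directly; you then get (i) $\Leftrightarrow$ (iii) by a purely combinatorial direction count (the $m_v$/sign-jump argument) and (i) $\Rightarrow$ (ii) by the parallel-hyperplane count, so all three conditions are shown pairwise equivalent at a fixed $x$ with no new geometric claiming lemma. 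Your (ii) $\Rightarrow$ (i) step is essentially the paper's own argument (perturb the normal in two opposite directions so that both perturbed perfect hyperplanes must pass through $x$), just written coordinate-free. What the paper's route buys is an explicit description of how Bob punishes an unbalanced line, with Lemma \ref{lem_at_least_half} reused later in the even case; what your route buys is a more self-contained argument resting only on Theorem \ref{general_easy} plus counting. Two details to tighten: in the (ii) $\Rightarrow$ (i) step, ``generic $\tilde n$'' must mean that the inner products $\langle y,\tilde n\rangle$, $y\in S$, are pairwise distinct, since otherwise a perfect hyperplane with normal $\tilde n$ need not exist (this is exactly Lemma \ref{lem_odd_perfect}), which is a slightly stronger requirement than the genericity of $u$ you state; and your ``winning bound'' is the pointwise content of the second paragraph of the proof of Theorem \ref{general_easy} (a winning point admits no hyperplane through it with more than half of $S$ strictly on one side) rather than its literal existential statement --- both points are routine and consistent with how the paper itself argues.
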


When $\abs{S}$ is even, we say that $H$ is \emph{perfect}, if $\abs{H^+ \cap S} < \abs{S}/2$ and $\abs{H^- \cap S} < \abs{S}/2$. Our main result for even-sized sets gives two necessary and sufficient conditions in order for Alice to win. The first one is exactly the same as the condition regarding perfect hyperplanes in the odd-sized case, whereas the second one again asks for every line through some point to be balanced, with an additional requirement regarding the odd-balanced lines.

\begin{theorem}
\label{thm_even_general}
If $\abs{S}$ is even, then Alice wins if and only if there exists a point $x \in \mathbb{R}^d$ that satisfies any one of the following equivalent conditions:
\begin{enumerate}[(i)]
    \item Every perfect hyperplane contains $x$. \label{condition1_even}
    \item Every line through $x$ is balanced, and there is a plane $P$ that contains all the odd-balanced lines about $x$. If there are no odd-balanced lines about $x$, then $x \notin S$. If there exists at least one odd-balanced line about $x$, then $x \in S$. Moreover, if $L^*=\{l_1,\dots,l_k\}$ is the set of odd-balanced lines, labeled in a clockwise order on $P$, where the half-lines appear in the order $l_1^+,\dots,l_k^+,l_1^-,\dots,l_k^-$, then $k$ is odd, and either $\abs{l_i^+ \cap S} = \abs{l_i^- \cap S}+1$ precisely when $i$ is odd, or $\abs{l_i^+ \cap S} = \abs{l_i^- \cap S}+1$ precisely when $i$ is even.
    \label{condition2_even}
\end{enumerate}
In this case, $a=x$ is a winning point for Alice.
\end{theorem}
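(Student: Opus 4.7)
The plan is to prove Theorem~\ref{thm_even_general} by showing that each of conditions (i) and (ii) at $x$ is equivalent to the hypothesis of Theorem~\ref{general_easy} at $x$: that every affine hyperplane through $x$ contains at most $n = |S|/2$ voters on each strict side. Once this is in hand, Theorem~\ref{general_easy} immediately yields that Alice wins iff such an $x$ exists, with $a = x$ a winning point; the equivalence (i) $\Leftrightarrow$ (ii) then follows by transitivity.

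For (i) and Theorem~\ref{general_easy}'s condition, the forward direction is a clean translation argument: if every hyperplane through $x$ has at most $n$ voters on each strict side, then no perfect hyperplane $H$ can miss $x$, because translating $H$ parallel through $x$ produces a hyperplane with $|H^+ \cap S| + |H \cap S| \geq n+1$ voters on one strict side, a contradiction. The converse requires a more careful rotation-and-translation construction: starting from a hyperplane through $x$ that violates the $\leq n$ bound, one builds a perfect hyperplane missing $x$.

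For (ii) and Theorem~\ref{general_easy}'s condition, the sufficiency direction is the technical heart. I would fix a hyperplane $H$ through $x$ and partition $S \setminus \{x\}$ by the line through $x$ on which each voter lies. Each such line is balanced by (ii), so its voters either all lie in $H$ (contributing to $|H \cap S|$) or split between $\ell^+$ and $\ell^-$ with counts differing by at most $1$. Even-balanced lines not in $H$ contribute equally to $|H^+ \cap S|$ and $|H^- \cap S|$; each odd-balanced line not in $H$ contributes $\pm 1$ to $|H^+ \cap S| - |H^- \cap S|$, with sign determined by which side of $H \cap P$ (inside $P$) the heavier half-line sits. The structural constraints in (ii)---all odd-balanced lines inside $P$, $k$ odd, and the clockwise labeling with parity-alternating heavier half-line---are exactly what force these $\pm 1$ contributions to cancel and deliver $|H^+ \cap S|, |H^- \cap S| \leq n$. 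The reverse direction (winning at $x$ implies (ii) at $x$) extracts the balanced-line condition by analyzing infinitesimal perturbations of the normal direction of a hyperplane through $x$, and pulls out the plane $P$, the odd value of $k$, and the alternating sign pattern from further counting against hyperplanes designed to probe the odd-balanced lines.

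Main obstacle: the combinatorial cancellation in the sufficiency of (ii). As $H \cap P$ rotates inside $P$, the sign $\sigma_i$ attached to each odd-balanced line $\ell_i$ flips whenever $H \cap P$ sweeps past $\ell_i$; one must verify that the clockwise order $\ell_1^+,\dots,\ell_k^+,\ell_1^-,\dots,\ell_k^-$ (with all ``$+$'' half-lines packed into one semicircle and all ``$-$'' into the opposite one), the parity-alternating choice of heavier half-line, and $k$ being odd together keep $\bigl|\sum_i \sigma_i\bigr| \leq 1$ throughout the entire rotation. Since (ii) forces $x \in S$ as soon as a single odd-balanced line exists, one has $|H \cap S| \geq 1$, and combined with $|H^+ \cap S| + |H^- \cap S| + |H \cap S| = |S|$ this bound yields $|H^\pm \cap S| \leq n$. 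Confirming the cancellation via case analysis over how $H \cap P$ slots between the $\ell_i$ is the crux of the argument.
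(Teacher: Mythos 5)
Your overall architecture --- routing both (i) and (ii) through the criterion of Theorem~\ref{general_easy} (``every hyperplane through $x$ has at most $|S|/2$ voters on each strict side'') --- is legitimate and close in spirit to the paper, which also uses Theorem~\ref{general_easy} as the bridge for condition (i). Your translation argument for ``Theorem~\ref{general_easy} at $x$ implies (i) at $x$'' is correct, and your cancellation argument for ``(ii) at $x$ implies Theorem~\ref{general_easy} at $x$'' does work out (the signed sum over a window of $k$ consecutive half-lines from the cyclic order $l_1^+,\dots,l_k^+,l_1^-,\dots,l_k^-$, with alternating heavier sides and $k$ odd, is indeed $\pm 1$, and $x\in S$ supplies the extra unit of slack); this is a mildly different and arguably cleaner route than the paper's, which instead counts directly what Alice claims against each $b$ via Observation~\ref{obs_weak} and Proposition~\ref{prop_even}. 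However, the two genuinely hard implications are asserted rather than proved, and these are where the substance of the theorem lives.

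First, the implication ``(i) at $x$ implies Theorem~\ref{general_easy} at $x$'' requires constructing a perfect hyperplane that misses $x$ whenever some hyperplane through $x$ has more than $|S|/2$ voters strictly on one side. Your ``more careful rotation-and-translation construction'' is exactly the content of the paper's Lemma~\ref{even_perfect_strong} (projection onto a suitable plane) combined with the intricate rotating-line construction in the proof of Theorem~\ref{thm_even_2d} (the points $p$ and $q$ and the lines $s_1,\dots,s_{i_*}$ through $p$, with the minimal index $j$ making $s_j$ perfect). Nothing in your sketch indicates how this construction would go, and it is not a routine perturbation: a naive rotation or translation of the violating hyperplane need not pass through a perfect position without also meeting $x$.

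Second, and more seriously, the implication ``Theorem~\ref{general_easy} at $x$ implies (ii) at $x$'' requires showing that the odd-balanced lines through $x$ are \emph{coplanar}. For $d\geq 3$ this does not follow from ``counting against hyperplanes designed to probe the odd-balanced lines'': one needs the Sylvester--Gallai theorem (the paper's Corollary~\ref{cor_Gallai}) to guarantee that a non-coplanar family of odd-balanced lines admits a plane containing exactly two of them, and only then can one build a hyperplane through $x$ with more than $|S|/2$ voters strictly on one side (via Lemma~\ref{lem_at_least_half}). Without this combinatorial-geometric input there is no reason a violating hyperplane exists, since any single hyperplane sees the non-coplanar odd-balanced lines with signs that could a priori cancel. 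You should also note that the remaining structural claims in (ii) --- that $x\in S$ exactly when an odd-balanced line exists, and that $k$ is odd --- need the parity argument (the lines through $x$ partition $S\setminus\{x\}$, so $k\equiv |S|-\mathbbm{1}_{\{x\in S\}} \pmod 2$) together with a separate argument ruling out $x\notin S$ with $k\geq 2$; these are absent from your outline.
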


Note that, even though condition (\ref{condition2_even}) looks more complicated, it is in fact easier to visualize, and it will also be useful when we consider the algorithmic problem of efficiently locating winning points.

For both main results, we will begin by proving the corresponding two-dimensional versions. Then, for the odd case, we will mimic the two-dimensional proof in order to get the result in higher dimensions. For the even case, we will reduce the higher dimensional case into the two-dimensional one, by projecting $S$ onto a plane.

It is easy to see that, for both odd-sized and even-sized sets, the perfect hyperplanes having a common point is a necessary condition, since in a different case Bob could find some perfect hyperplane $H$ that does not contain $a$, and win by choosing the projection of $a$ onto $H$.

For odd-sized sets, it is also easy to see that the other two conditions are sufficient. For the first condition, we just need to look at the good hyperplane which is perpendicular to the line through $a$ and $b$. For the third condition, we observe that, for any line $l$ through $a$, Alice can always claim all the points of $S$ on one of the two half-lines defined by $a$ on $l$. In order to show that the perfect hyperplanes being concurrent is sufficient, we will rely on the fact that we can always find `a lot of' perfect hyperplanes.

For even-sized sets in two dimensions, we will first show that there exists a perfect line through any point of $S$. We will then show that, if the perfect lines are concurrent, but the condition on balanced lines is not satisfied, then we can in fact find some new perfect line which is not concurrent with the other ones. If the balanced lines condition is satisfied, then Alice will again be able to claim at least half of the votes from every even-balanced line. Asking for the odd-balanced lines to `alternate' in the way described above also guarantees that Alice claims enough votes on those lines in order to win.

Once we have proven the two main results, we will demonstrate that, unless $d=1$, or $d=2$ and there is an even number of voters, the set of Alice wins is `very small'. More precisely, we will show that any set of at least three points in general position is winning for Bob. This implies that, for fixed $n \geq 3$, the set of ordered $n$-tuples that correspond to an Alice win is nowhere dense, as a subset of the metric space $M_{n,d} \coloneqq \{ (x_1, \dots, x_n) \in (\mathbb{R}^d)^n: \text{all the} \ x_i \ \text{are distinct}\}$, endowed with the Euclidean metric.

We will then provide algorithms that use the condition on balanced lines in order to determine whether Alice wins, and also locate her unique winning point (provided that the voters do not lie on a single line, in which case it is easy to find the set of winning points for Alice). Our algorithms run in polynomial time.

At the end of the paper, we will prove that, whatever the structure of $S$ is, Alice can guarantee that she wins at least a proportion $\frac{1}{d+1}$ of the votes. The (short) proof of this result is due to the author, David Ellis and Robert Johnson.
 
\section{The result in two dimensions}

As we are now working in two dimensions, a hyperplane will simply be a line. In this section, $l^+$ and $l^-$ will be used to denote the two open half-lines defined by some point $x$ on the line $l$, rather than the two open half-planes into which $l$ divides the plane. We begin with a simple observation regarding the points claimed by Alice:

\begin{obs}
\label{obs_weak}
    Let $l$ be the perpendicular to the line $ab$ at $a$. Then Alice claims all the points in the closed half-plane defined by $l$ that does not contain $b$.
\end{obs}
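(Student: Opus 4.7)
The plan is to reduce the statement to the well-known fact that a point is strictly closer to $a$ than to $b$ if and only if it lies on the $a$-side of the perpendicular bisector of the segment $ab$. The key observation is that the line $l$, being perpendicular to $ab$ at $a$, is parallel to this perpendicular bisector, which is perpendicular to $ab$ at the midpoint $m$ of $ab$.

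More concretely, I would first set up coordinates (or just argue geometrically) so that $l$ and the perpendicular bisector $l'$ of $ab$ are two parallel lines, with $a \in l$, $m \in l'$, and $b$ strictly on the far side of $l'$ from $l$. Then any point $y$ in the closed half-plane defined by $l$ that does not contain $b$ satisfies that $y$ and $b$ lie in opposite open half-planes of $l'$, which gives $\rho(a,y) < \rho(b,y)$. Since $V_A$ counts points with \emph{strict} inequality, every voter in this closed half-plane is claimed by Alice.

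There is no real obstacle here — the observation is essentially the statement that the half-plane in question is a subset of the Voronoi cell of $a$ with respect to $\{a,b\}$. The only thing worth checking carefully is that the boundary line $l$ itself lies strictly on the $a$-side of the perpendicular bisector (so that voters on $l$ are also claimed by Alice, and not merely tied), which follows from the fact that $a \ne b$ and so $m \ne a$.
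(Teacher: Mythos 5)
Your proof is correct, and it matches the paper's (implicit) reasoning: the paper states this as an immediate observation and, right after it, notes that the half-plane in question is contained in the larger set given by the perpendicular bisector (Observation \ref{obs_perp_bis}), which is exactly the reduction you carry out. Your extra check that the boundary line $l$ lies strictly on the $a$-side of the bisector (so voters on $l$ are claimed, not tied) is the right detail to verify for the \emph{closed} half-plane.
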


In fact, what we have just described is a subset of the set that contains all the points claimed by Alice. We can improve our previous observation in order to define this bigger set:

\begin{obs}
\label{obs_perp_bis}
    Let $l$ be the perpendicular bisector of the line $ab$. Then Alice claims precisely those points that belong to the open half-plane defined by $l$ that contains $a$.
\end{obs}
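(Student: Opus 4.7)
The plan is essentially to unwind the definitions: by the definition of the scores, Alice claims a voter $x$ precisely when $\rho(a,x) < \rho(b,x)$, so the observation reduces to identifying the set $\{x \in \mathbb{R}^2 : \rho(a,x) < \rho(b,x)\}$ as the open half-plane bounded by $l$ that contains $a$.

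First I would recall the standard characterization of the perpendicular bisector of a segment: the line $l$ consists of exactly those points $x \in \mathbb{R}^2$ satisfying $\rho(a,x) = \rho(b,x)$. One quick way to see this is to square both sides of $\rho(a,x) = \rho(b,x)$ and cancel $\norm{x}_2^2$, obtaining the linear equation $2\langle x, b-a\rangle = \norm{b}_2^2 - \norm{a}_2^2$, which clearly defines a line; it passes through the midpoint $(a+b)/2$ and is orthogonal to $b-a$, so it is indeed the perpendicular bisector of $ab$.

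Next I would note that the same squaring argument shows that the inequality $\rho(a,x) < \rho(b,x)$ is equivalent to the linear inequality $2\langle x, b-a\rangle < \norm{b}_2^2 - \norm{a}_2^2$, which defines one of the two open half-planes determined by $l$. Substituting $x = a$ gives $2\langle a, b-a\rangle = 2\langle a,b\rangle - 2\norm{a}_2^2$, and comparing with $\norm{b}_2^2 - \norm{a}_2^2$ reduces to checking $\norm{b-a}_2^2 > 0$, which holds since we are assuming $b \neq a$. Hence $a$ itself lies in this half-plane, so the set $\{x : \rho(a,x) < \rho(b,x)\}$ is precisely the open half-plane bounded by $l$ containing $a$. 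This gives the claim, and the only potential subtlety — ensuring that $a$ really does lie in the correct side of $l$ — is handled by the elementary computation above, so there is no significant obstacle to the proof.
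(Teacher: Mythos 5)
Your proof is correct: the squaring argument identifying $\{x : \rho(a,x)<\rho(b,x)\}$ with the open half-plane of the perpendicular bisector containing $a$ (using $b\neq a$ to place $a$ on the correct side) is exactly the standard fact the paper relies on, stating the observation without proof. Nothing is missing.
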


This shows that, for fixed $a$, Bob's score can never decrease if he decides to move from $b$ to $b'$, for some $b'$ strictly between $a$ and $b$ on the line $ab$. If Bob chooses a point sufficiently close to $a$, then all the points claimed by Alice belong to the set given by Observation \ref{obs_weak}.

\subsection{Odd number of voters}

As we have already mentioned, the notion of perfect lines is important since Alice has to position herself on every such line:
\begin{obs}
\label{obs_odd_2d}
If the line $l$ is perfect (or just good), and $a \notin l$, then Bob wins by choosing $b$ to be the projection of $a$ onto $l$.
\end{obs}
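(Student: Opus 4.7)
My plan is a direct application of Observation \ref{obs_perp_bis}. Let $b$ be the foot of the perpendicular from $a$ to $l$, so $ab \perp l$, and let $m$ be the perpendicular bisector of the segment $ab$. Then $m$ is parallel to $l$ and lies strictly between $a$ and $l$. By Observation \ref{obs_perp_bis}, Alice claims exactly the points of $S$ in the open half-plane bounded by $m$ containing $a$, while Bob claims those in the opposite open half-plane.

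To make the bookkeeping transparent, I would pick coordinates in which $b$ is the origin, $a=(0,h)$ with $h>0$, and $l$ is the $x$-axis; then $m$ is the line $y=h/2$. So Alice's territory is $\{y>h/2\}$ and Bob's is $\{y<h/2\}$. By Pythagoras, every point of $l \cap S$ is strictly closer to $b$ than to $a$ (this uses $a \notin l$), so it goes to Bob, and obviously every point of $S$ lying strictly on the far side of $l$ from $a$ also goes to Bob.

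To conclude, let $H^+$ be the open half-plane bounded by $l$ that contains $a$, and $H^-$ the opposite one. Since $l$ is good, $\abs{H^+ \cap S} = \abs{H^- \cap S} =: k$, and since $\abs{S}$ is odd, $\abs{l \cap S} = \abs{S}-2k$ is odd and hence at least $1$. Alice's score is at most $\abs{H^+ \cap S} = k$, since she loses everything on $l$ and everything in $H^-$, while Bob's score is at least $\abs{H^- \cap S} + \abs{l \cap S} = \abs{S}-k \geq k+1$. Therefore $V_B > V_A$ and Bob wins. There is essentially no obstacle here; the only small point worth flagging is that mere goodness suffices, since the parity of $\abs{S}$ already forces $\abs{l \cap S} \geq 1$ without having to invoke perfectness.
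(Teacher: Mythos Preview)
Your proof is correct. The paper states this as an observation without proof, and your argument spells out exactly the reasoning the paper leaves implicit: projecting $a$ onto $l$ makes the perpendicular bisector of $ab$ parallel to $l$ and strictly on the $a$-side of $l$, so Bob captures all of $l \cap S$ together with the far half-plane $H^- \cap S$, giving $V_B \geq |S|-k > k \geq V_A$ by the parity of $|S|$.
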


Moreover, a lot of perfect lines exist:
\begin{lemma}
\label{lem_2d_odd_perfect}
When $d=2$ and $\abs{S}$ is odd, a perfect line exists in all but finitely many directions.
\end{lemma}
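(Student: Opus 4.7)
The plan is to parametrize the lines in $\mathbb{R}^2$ by their direction. I will fix a unit vector $u \in \mathbb{R}^2$ and a unit normal $u^\perp$, and note that every line parallel to $u$ has the form $H_\lambda \coloneqq \{x \in \mathbb{R}^2 : \langle x, u^\perp \rangle = \lambda\}$ for some $\lambda \in \mathbb{R}$. The way $H_\lambda$ splits $S$ is then controlled entirely by how the real numbers $\langle y, u^\perp \rangle$ (for $y \in S$) compare with $\lambda$, so the whole problem reduces to a one-dimensional median argument on these projected values.

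The key step is the following: if the projections $\{\langle y, u^\perp \rangle : y \in S\}$ are all distinct, then, writing $\abs{S}=2m+1$ and sorting them as $p_1 < \dots < p_{2m+1}$, I will take $\lambda$ to be the median $p_{m+1}$. The resulting line $H_\lambda$ then contains exactly one point of $S$, with $m$ points on each strict side, so it is perfect. Hence it suffices to show that all but finitely many directions $u$ yield pairwise distinct projections.

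To finish, I will identify the ``bad'' directions. Two projections $\langle y_1, u^\perp \rangle$ and $\langle y_2, u^\perp \rangle$ coincide precisely when $\langle y_1 - y_2, u^\perp \rangle = 0$, i.e., exactly when $u$ is parallel to the chord $y_1 - y_2$. Each of the at most $\binom{\abs{S}}{2}$ pairs of distinct points of $S$ therefore forbids only a single direction (up to sign), so the set of bad directions is finite, which is exactly what the lemma asserts.

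I do not expect a serious obstacle here: the argument is essentially a one-dimensional median argument applied after projecting $S$ onto $u^\perp$. The only subtlety is that perfection requires the line to contain \emph{exactly} one point of $S$ (not merely to split $S$ evenly), and it is precisely this requirement that forces us to exclude the finitely many chord directions.
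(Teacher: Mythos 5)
Your proof is correct and follows essentially the same route as the paper's: project $S$ onto the normal direction (the paper does this via the intercepts $c$ of lines $y=mx+c$), take the median of the distinct projected values to get a perfect line, and note that only the at most $\binom{\abs{S}}{2}$ chord directions can fail. Your unit-normal parametrization is marginally cleaner than the slope--intercept one, but the argument is the same.
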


\begin{proof}
Let $m \in \mathbb{R}$, and consider lines of the form $y=mx+c$, as $c$ varies over $\mathbb{R}$. If each such line contains at most one point of $S$, then clearly some perfect line with gradient $m$ exists. (More precisely, let $c_1, \ldots, c_n$ be the choices of $c$ for which $y=mx+c$ contains exactly one point of $S$, and let $c^*$ be the median value of the $c_i$. Then the line given by $y=mx+c^*$ is perfect.) But the number of lines that pass through at least two points of $S$ is at most $\binom{\abs{S}}{2}$, which means that a perfect line exists in all but at most $\binom{\abs{S}}{2}$ directions.
\end{proof}

\begin{theorem}

When $d=2$ and $\abs{S}$ is odd, Alice wins if and only if there exists a point $x \in S$ that satisfies any one of the following equivalent conditions:

\begin{enumerate}[(i)]
    \item Every line through $x$ is good. \label{condition1_2d_odd}
    \item Every perfect line contains $x$. \label{condition2_2d_odd}
    \item Every line through $x$ is even-balanced. \label{condition3_2d_odd}
\end{enumerate}
In this case, $a=x$ is a winning point for Alice.
\end{theorem}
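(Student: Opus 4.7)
My plan splits the argument into three tasks: (a) the sufficiency of (i), (b) the necessity of (ii), and (c) the pairwise equivalence of the three conditions. For (a), I would take $a = x$ with $x$ satisfying (i) and, for any Bob response $b$, consider the line $n$ through $a$ perpendicular to $ab$: by (i), the two open half-planes of $n$ contain equal numbers of $S$-points, and Observation \ref{obs_weak} shows Alice claims the entire closed half-plane defined by $n$ that does not contain $b$. Since $x \in n \cap S$, Alice claims at least $(\abs{S}-1)/2 + 1 > \abs{S}/2$ points, and so wins. For (b), Observation \ref{obs_odd_2d} forces any winning point $a$ to lie on every perfect line, and Lemma \ref{lem_2d_odd_perfect} produces perfect lines in infinitely many distinct directions, all through $a$. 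If $a \notin S$, each such perfect line would be uniquely determined by $a$ together with its single $S$-point, giving at most $\abs{S}$ possible directions --- a contradiction. Hence $a \in S$, and $x = a$ satisfies (ii).

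For (c), the main tool will be a rotation argument. Given any line $l$ through $x$ at direction $\theta_0$, for sufficiently small $\varepsilon > 0$ the directions $\theta_0 \pm \varepsilon$ are non-exceptional in the sense of the proof of Lemma \ref{lem_2d_odd_perfect}, so perfect lines in these directions exist. Under (ii), each such perfect line must pass through $x$ and hence coincides with the line through $x$ in that direction; this line contains only $x$ from $S$ and has $(\abs{S}-1)/2$ points of $S$ strictly in each of its two open half-planes. A small rotation of $l$ sends the $r$ (respectively $\ell$) points of $S$ lying on $l$ strictly to one side of $x$ into one (respectively the other) open half-plane of the rotated line, while off-line points of $S$ do not switch sides. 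Letting $A, B$ denote the off-line counts for $l$, the good conditions on the two rotated lines yield $A + \ell = B + r$ and $A + r = B + \ell$; adding gives $A = B$ (so (ii) $\Rightarrow$ (i)), while subtracting gives $r = \ell$ (so (ii) $\Rightarrow$ (iii)). Running exactly the same rotation argument under hypothesis (i) instead (so that the perturbed lines are good because every line through $x$ is) also gives (i) $\Rightarrow$ (iii).

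It remains to establish (i) $\Rightarrow$ (ii) and (iii) $\Rightarrow$ (i). For the first, if a perfect line $m$ fails to contain $x$, I would take $m'$ to be the line through $x$ parallel to $m$; (i) forces $m'$ to be good, and a direct count of the $S$-points in the strip between $m$ and $m'$ together with those on $m$ and $m'$ themselves forces $2|S \cap \text{strip}| + |S \cap m'| + 1 = 0$, which is impossible since $x \in m' \cap S$. For (iii) $\Rightarrow$ (i), I would place $x$ at the origin and assign each $y \in S \setminus \{x\}$ its polar angle $\phi_y \in [0, 2\pi)$; (iii) is exactly the statement that this multiset of angles is invariant under the antipodal map $\phi \mapsto \phi + \pi \pmod{2\pi}$, and this symmetry pairs up the $S$-points in the two open arcs $(\theta, \theta + \pi)$ and $(\theta + \pi, \theta + 2\pi)$, which are precisely the two open half-planes of any line through $x$ at direction $\theta$. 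I expect the main obstacle to be handling the exceptional directions carefully in the rotation argument --- ensuring both that the perturbed directions avoid the finitely many exceptional values, and that no off-line $S$-point flips sides under an infinitesimal rotation --- though each of these reduces to choosing $\varepsilon$ small enough compared to the exceptional set and the perpendicular distances from off-line points of $S$ to $l$.
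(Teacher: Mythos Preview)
Your proof is correct and follows essentially the same approach as the paper: the sufficiency of (i), the necessity of (ii) via Observation~\ref{obs_odd_2d}, and the small-rotation argument deriving both (i) and (iii) from (ii) all match the paper's proof. You add a few extras the paper omits or leaves implicit --- the explicit argument that the winning point lies in $S$, and the direct implications (i)$\Rightarrow$(ii) via the parallel-line count and (iii)$\Rightarrow$(i) via antipodal symmetry --- which are correct but redundant once the cycle through ``Alice wins'' is closed.
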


\begin{proof}

Suppose there exists $x$ satisfying (\ref{condition1_2d_odd}). Alice chooses $a=x$. Then, whatever the choice of $b$ is, we can think of $a$ as being the projection of $b$ onto some good line, showing that Alice wins.

Now suppose that there exists $x$ satisfying (\ref{condition3_2d_odd}). Alice chooses $a=x$. Whatever the choice of $b$ is, Alice claims $x$, and, for any line $l$ through $x$, either all points of $S$ on $l^+$, or all points of $S$ on $l^-$. Therefore, Alice gets more than half of the votes, and she wins.

If Alice wins, then Observation \ref{obs_odd_2d} implies that (\ref{condition2_2d_odd}) necessarily holds.

Suppose now that $x$ satisfies (\ref{condition2_2d_odd}). Then, if Alice is to win, she must choose $x$. Since a perfect line exists in all but finitely many directions, all but finitely many lines through $x$ are perfect. Let $l_0$ be any line through $x$. Let $k$ and $k'$ be the number of points of $S$ in the two open half-planes defined by $l_0$, and let $p$ and $p'$ be the number of points of $S$ on $l_0^+$ and $l_0^-$, respectively. We can rotate $l_0$ clockwise about $x$ by some sufficiently small angle, so that our line does not `meet' any points of $S$ while rotating, and get a perfect line $l_1$. Similarly, we can get a perfect line $l_2$ by rotating anticlockwise. Since both $l_1$ and $l_2$ are perfect, we deduce that $k+p=k'+p'$, and $k+p'=k'+p$, which gives $k=k'$, and $p=p'$, proving both (\ref{condition1_2d_odd}) and (\ref{condition3_2d_odd}).
\end{proof}

\begin{figure}
    \centering
    
    \begin{tikzpicture}
    \draw[black, thick] (-3,-2) -- (3,2);
    \filldraw[black] (-1,-2/3) circle (1.5pt);
    \filldraw[black] (-1.5,-1) circle (1.5pt);
    \filldraw[black] (1.2,0.8) circle (1.5pt);
    \filldraw[black] (2.7,1.8) circle (1.5pt);
    
    \draw[black, thick] (1.5,-2) -- (-1.5,2);
    \filldraw[black] (1,-4/3) circle (1.5pt);
    \filldraw[black] (0.6,-0.8) circle (1.5pt);
    \filldraw[black] (-0.3,0.4) circle (1.5pt);
    \filldraw[black] (-1.2,1.6) circle (1.5pt);
    
    \draw[black, thick] (-1,-2) -- (1,2);
    \filldraw[black] (-0.9,-1.8) circle (1.5pt);
    \filldraw[black] (-0.8,-1.6) circle (1.5pt);
    \filldraw[black] (-0.6,-1.2) circle (1.5pt);
    \filldraw[black] (0.2,0.4) circle (1.5pt);
    \filldraw[black] (0.45,0.9) circle (1.5pt);
    \filldraw[black] (0.9,1.8) circle (1.5pt);
    
    \draw[black, thick] (2.5,-2) -- (-2.5,2);
    \filldraw[black] (2,-1.6) circle (1.5pt);
    \filldraw[black] (-1,0.8) circle (1.5pt);
    
    \draw[black, thick] (-3,-1) -- (3,1);
    \filldraw[black] (-2,-2/3) circle (1.5pt);
    \filldraw[black] (1.5,0.5) circle (1.5pt);
    
    \filldraw[black] (0,0) circle (1.5pt) node[anchor=west] {$a$};
\end{tikzpicture}

    \caption{A winning set for Alice with $\abs{S}$ odd and $a \in S$, or with $\abs{S}$ even and $a \notin S$.}
    \label{fig:odd_2d}
\end{figure}
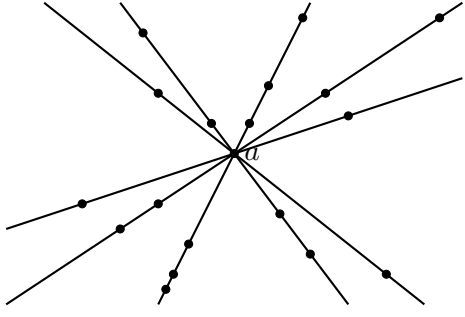

\subsection{Even number of voters}
Now suppose $S$ consists of an even number of points. Note that (the analogue of) Observation \ref{obs_odd_2d} holds true:

\begin{obs}
\label{obs_even_2d}
If the line $l$ is perfect, and $a \notin l$, then Bob wins by choosing $b$ to be the projection of $a$ onto $l$.
\end{obs}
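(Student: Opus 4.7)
The plan is to apply Observation \ref{obs_perp_bis} to control both scores, and then read off $V_B>V_A$ directly from the definition of a perfect line. First, I would let $l'$ denote the perpendicular bisector of the segment $ab$. Since $b$ is the foot of the perpendicular from $a$ onto $l$, the segment $ab$ is perpendicular to $l$; hence $l'$ is parallel to $l$ and, because $a\notin l$, lies strictly between $a$ and $l$.

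By Observation \ref{obs_perp_bis}, Alice claims precisely the points of $S$ in the open half-plane of $l'$ containing $a$. Let me write $P^+$ for the open half-plane of $l$ containing $a$, and $P^-$ for the other one (avoiding the notation $l^+,l^-$ which in this section means half-lines). Then Alice's open half-plane of $l'$ is strictly contained in $P^+$, while Bob's open half-plane of $l'$ contains both the line $l$ itself and the whole of $P^-$. These inclusions yield the bounds $V_A\le|P^+\cap S|$ and $V_B\ge|l\cap S|+|P^-\cap S|=|S|-|P^+\cap S|$.

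Finally, I would invoke the definition of \emph{perfect} in the even case: $|P^+\cap S|<|S|/2$ and $|P^-\cap S|<|S|/2$. The first inequality gives $V_A<|S|/2$, and substituting into the lower bound for $V_B$ gives $V_B>|S|/2>V_A$, so Bob wins. The argument is essentially bookkeeping once $l'$ has been identified, so I do not anticipate any real obstacle; the only point worth highlighting is that $a\notin l$ is precisely what forces $l'\neq l$ and makes the two half-plane inclusions above strict, which in turn is what yields the proper separation of the two score bounds.
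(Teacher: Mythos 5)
Your proof is correct, and it is essentially the argument the paper relies on: the paper states this as an unproved observation, with the intended justification being exactly your use of Observation \ref{obs_perp_bis} — the perpendicular bisector of $ab$ is parallel to $l$ and strictly between $a$ and $l$, so Bob claims $l$ and the far open half-plane while Alice is confined to the near one, and the defining inequality $\abs{P^+\cap S}<\abs{S}/2$ for a perfect line then gives $V_B>\abs{S}/2>V_A$. Your explicit bookkeeping, including noting that $a\notin l$ is what makes the inclusions strict, fills in the details faithfully and adds nothing that conflicts with the paper's conventions.
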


The existence of lots of perfect lines might not be as obvious as in the case where $\abs{S}$ is odd, but we can indeed prove a similar result:

\begin{lemma}
\label{lem_2d_even_perfect}
Let $S$ be a set of $2n$ points in $\mathbb{R}^2$. Then there exists a perfect line through any point of $S$.
\end{lemma}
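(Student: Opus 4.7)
The plan is to use a continuous rotation argument. Fix $x \in S$; by translating we may assume $x$ is the origin, and let $S' \coloneqq S \setminus \{x\}$, so $\abs{S'} = 2n-1$. For each $y \in S'$ write $\alpha_y \in [0, 2\pi)$ for its polar angle. After a small rotation of the plane, and a reflection if necessary, we may assume that no $y \in S'$ has $\alpha_y \in \{0, \pi\}$ and that the open upper half-plane contains at least $n$ points of $S'$; this last condition can be arranged because $\abs{S'}=2n-1$ is odd, so one of the two open halves defined by the $x$-axis contains at least $n$ points. For $\phi \in [0, 2\pi)$ let $l(\phi)$ be the line through the origin of direction $\phi$, and let $H^+(\phi) \coloneqq \{y : \alpha_y \in (\phi, \phi+\pi) \bmod 2\pi\}$ and $H^-(\phi) \coloneqq \{y : \alpha_y \in (\phi+\pi, \phi+2\pi) \bmod 2\pi\}$ denote its two open half-planes. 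Writing $N^{\pm}(\phi) \coloneqq \abs{S' \cap H^{\pm}(\phi)}$, the line $l(\phi)$ (which contains $x$) is perfect precisely when $N^+(\phi), N^-(\phi) \leq n-1$.

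The function $N^+$ is an integer-valued step function on $[0, \pi]$ which jumps only at \emph{critical angles}, i.e.\ those $\phi$ for which $l(\phi)$ contains a point of $S'$. By our reduction $N^+(0) \geq n$, while $N^+(\pi) = N^-(0) = (2n-1) - N^+(0) \leq n-1$. Consequently there is a smallest $\phi^* \in (0, \pi)$ where $N^+(\phi^* + \varepsilon) \leq n-1$ but $N^+(\phi^* - \varepsilon) \geq n$, and this $\phi^*$ is necessarily a critical angle.

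At $\phi^*$ set $a \coloneqq \abs{\{y \in S' : \alpha_y = \phi^*\}}$ and $b \coloneqq \abs{\{y \in S' : \alpha_y = \phi^* + \pi\}}$; these $a+b$ points lie on $l(\phi^*)$. Just before $\phi^*$ the $a$ points belong to $H^+$ and the $b$ points to $H^-$, and as $\phi$ crosses $\phi^*$ they swap sides. Writing $N^+(\phi^* - \varepsilon) = n+k$ with $k \geq 0$, the defining property of $\phi^*$ gives $N^+(\phi^* + \varepsilon) = n+k-a+b \leq n-1$, whence $a-b \geq k+1$ and in particular $a \geq k+1$. At $\phi^*$ itself the $a$ points have moved off $H^+$ onto the line, so
\[
N^+(\phi^*) = (n+k) - a \leq n-1 \quad \text{and} \quad N^-(\phi^*) = (n-1-k) - b \leq n-1,
\]
showing that $l(\phi^*)$ is a perfect line through $x$.

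The main subtlety I anticipate is the non-generic case in which several points of $S'$ lie simultaneously on the rotating line (i.e.\ when some points of $S$ are collinear with $x$). Tracking the two-sided multiplicities $a, b$ separately, rather than only the net signed jump $b-a$ of $N^+$, is what yields the stronger inequality $a \geq k+1$ needed to bound the value of $N^+$ on the line itself, and not just the post-jump value $N^+(\phi^* + \varepsilon)$.
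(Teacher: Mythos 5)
Your proof is correct. The reduction to $N^+(\phi^*),N^-(\phi^*)\leq n-1$ matches the definition of a perfect line for $\abs{S}=2n$ (the points of $S$ on the line, including $x$ itself, count on neither side), the existence of the first crossing angle $\phi^*$ follows from the step-function behaviour of $N^+$ with $N^+(0)\geq n$ and $N^+(\pi)\leq n-1$, and your careful bookkeeping of the two multiplicities $a$ and $b$ at $\phi^*$ is exactly what is needed: the net jump alone would only bound $N^+(\phi^*+\varepsilon)$, whereas $a\geq k+1$ bounds the count at the critical angle itself, where the collinear points sit on the line rather than in either open half-plane.

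Your route differs in mechanics from the paper's. The paper works with the finite cyclic list $l_1,\dots,l_L$ of lines through $x$ containing at least one other point of $S$, sets up the half-plane counts $T_i,T_i'$, assumes for contradiction that for every $i$ exactly one of $T_i<n$, $T_i'<n$ holds, and chains inequalities around the cyclic order ($T_1\geq n \Rightarrow T_L'\geq n \Rightarrow T_{L-1}'\geq n \Rightarrow \cdots$) until the chain wraps around and contradicts itself; the total count $T=2n-1$ is what makes each step of the chain fail by one. Your argument is instead a direct, constructive sweep: a discrete intermediate-value argument locates the first direction at which the upper count drops below $n$, and the jump analysis shows that very line is perfect. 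Both proofs are elementary and of comparable length; yours produces the perfect line explicitly as the first crossing of a monotone-in-spirit sweep and isolates the only delicate point (several points of $S$ collinear with $x$) cleanly via the multiplicities $a,b$, while the paper's contradiction chain avoids any talk of angles, rotations or limits and stays purely within finite counting over the candidate lines.
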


\begin{proof}
Fix an arbitrary point $x \in S$. Let $l_1, \dots, l_L$ be the collection of all lines through $x$ that contain at least one other point of $S$, labeled in a clockwise order. Note that $L \leq 2n-1$. Fix a direction on $l_1$ (called `left to right') and obtain the corresponding direction on $l_i$ by rotating $l_1$ clockwise until it meets $l_i$. Let $l_i^+$ be the open half-line to the right of $x$, and $l_i^-$ the open half-line to the left of $x$. Let $u_i \coloneqq \abs{l_i^+ \cap S}$, and $v_i \coloneqq \abs{l_i^- \cap S}$. Then $T \coloneqq \sum_{i=1}^L (u_i+v_i) = 2n-1$. Let $T_i \coloneqq u_{i+1}+\dots+u_L+v_1+\dots+v_{i-1}$, and $T'_i \coloneqq v_{i+1}+\dots+v_L+u_1+\dots+u_{i-1}$. In other words, $T_i$ and $T'_i$ are the number of points of $S$ contained in each of the two (open) half-planes defined by $l_i$. Finding a perfect line through $x$ is then equivalent to showing that there exists $1 \leq i \leq L$ such that $T_i < n$ and $T'_i < n$.

Note that, for each $i$, either $T_i < n$ or $T'_i < n$ must hold. Now assume that exactly one of these holds for any fixed $i$. Then suppose, without loss of generality, that $T_1 \geq n$. If $T_L \geq n$, then $2n \leq T_1+T_L = (u_2+\dots+u_L)+(v_1+\dots+v_{L-1}) \leq T = 2n-1$, a contradiction. Therefore, $T'_L \geq n$. If $T_{L-1} \geq n$, then $2n \leq T'_L+T_{L-1} = (u_1+\dots+u_{L-1})+(u_L+v_1+\dots+v_{L-2}) \leq T = 2n-1$, a contradiction. Therefore, $T'_{L-1} \geq n$. Similarly, we get $T'_i\geq n$ for $2 \leq i \leq L-2$. But then $T'_2 \geq n$ would imply $T'_1 \geq n$, which contradicts $T_1 \geq n$.
\end{proof}

We will now explain why a set that either looks like the one from Figure \ref{fig:odd_2d} (with $a \notin S$), or the one from Figure \ref{fig:even_2d} (with $a \in S$), is winning for Alice:

\begin{prop}
\label{prop_even}
Let $S$ be a set of $2n$ points in $\mathbb{R}^2$. If either of the following two conditions holds, then Alice wins:

\begin{itemize}
    
    \item All the perfect lines are concurrent at some $x \notin S$, and every perfect line is even-balanced.
    
    \item All the perfect lines are concurrent at some $x \in S$, and every perfect line is balanced. Additionally, if $L^*=\{l_1,\dots,l_k\}$ is the set of odd-balanced perfect lines (in a clockwise order), where the half-lines appear in the order $l_1^+,\dots,l_k^+,l_1^-,\dots,l_k^-$, then $k$ is odd, and either $\abs{l_i^+ \cap S} = \abs{l_i^- \cap S}+1$ precisely when $i$ is odd, or $\abs{l_i^+ \cap S} = \abs{l_i^- \cap S}+1$ precisely when $i$ is even.
\end{itemize}
In either case, $a=x$ is a winning point for Alice.
\end{prop}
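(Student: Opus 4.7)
The plan is to verify, at $a=x$, the geometric property (A): every line through $x$ has both open half-planes of size at most $n:=|S|/2$. This is equivalent to Alice winning from $a=x$: by the remark following Observation~\ref{obs_perp_bis}, Bob's optimal response in any direction $u$ is a $b$ arbitrarily close to $a$ along $u$; in this limit Alice claims every voter in the closed half-plane on her side of the line $\ell_u$ through $a$ perpendicular to $u$, while Bob claims every voter strictly on the $u$-side of $\ell_u$. A short count then gives $V_A\geq V_B$ iff the $u$-side open half-plane of $\ell_u$ has at most $n$ voters, and letting $u$ vary recovers (A).

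The structural input is that every line through $x$ containing a voter $y\neq x$ is itself perfect. Indeed, by Lemma~\ref{lem_2d_even_perfect} some perfect line passes through $y$; by the concurrency hypothesis it also passes through $x$, so it is $xy$. Consequently, every such line is balanced (by the case hypothesis): in case~(a), even-balanced with $|l^+\cap S|=|l^-\cap S|$, and in case~(b), balanced with $||l^+\cap S|-|l^-\cap S||\leq 1$. Lines through $x$ containing no other voter are trivially balanced. Now introduce the counter $g(\theta):=|\{p\in S:\langle p-x,(\cos\theta,\sin\theta)\rangle>0\}|$, so (A) becomes $g(\theta)\leq n$ for every $\theta$. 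The step function $g$ jumps only at angles whose perpendicular lies along one of the lines through $x$ described above, and at the two events associated with such a line $\ell$ the jumps are $\pm(|\ell^+\cap S|-|\ell^-\cap S|)$ (the voters on the two half-lines swap sides of the rotating perpendicular).

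In case~(a), every jump is $0$, so $g$ is constant at generic $\theta$; since $x\notin S$, $g(\theta)+g(\theta+\pi)=2n$ at such $\theta$, forcing $g\equiv n$ and hence $g\leq n$ everywhere. In case~(b), even-balanced lines contribute no jump, while each odd-balanced $l_i$ contributes $+1$ at one of its two events and $-1$ at the other. The assumed clockwise order $l_1^+,\dots,l_k^+,l_1^-,\dots,l_k^-$ of the half-lines, combined with the $\pm 1$ alternation of $|l_i^+\cap S|-|l_i^-\cap S|$ in $i$ and the oddness of $k$, forces the nonzero jumps, read in rotational order, to alternate strictly $+,-,+,-,\dots$ throughout one full turn. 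Hence $g$ takes only two consecutive integer values; because $x\in S$ lies on every perpendicular, $g(\theta)+g(\theta+\pi)=2n-1$ at generic $\theta$, pinning those values to $n-1$ and $n$, so again $g\leq n$. The delicate step is case~(b): matching the stated clockwise ordering of half-lines with the angular order in which the $2k$ jumps are actually encountered by the rotating line (each $l_i$ contributing two antipodal events with opposite signs), and checking that "$k$ odd" is precisely what makes the global $+,-,+,-,\dots$ pattern close up consistently after one full revolution.
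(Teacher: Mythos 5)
Your argument is correct, and it is in substance the same count as the paper's, repackaged. The paper argues directly from Observation~\ref{obs_weak}: whatever $b$ is, Alice's closed half-plane contains $x$, at least half the voters of every even-balanced line, and $k$ consecutive half-lines from the cyclic sequence $l_1^+,\dots,l_k^+,l_1^-,\dots,l_k^-$, which by the alternation hypothesis carry at least $N$ of the $2N+1$ points on the odd-balanced lines, giving $V_A\geq n$. You instead verify the half-plane condition of Theorem~\ref{general_easy} (every open half-plane bounded by a line through $x$ contains at most $n$ voters) by sweeping a line through $x$ and tracking the jump function $g$. These are dual formulations: ``any $k$ consecutive half-lines carry at least $N$ points'' is exactly your ``the $2k$ jumps alternate in sign, so $g\in\{n-1,n\}$.'' What your route buys is a clean plug-in to Theorem~\ref{general_easy} and the explicit two-value structure of $g$; what the paper's buys is self-containedness within Section~2 (no appeal to the hyperplane criterion) and a shorter count. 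Your preliminary observation that every line through $x$ meeting $S\setminus\{x\}$ is perfect (Lemma~\ref{lem_2d_even_perfect} plus concurrency), hence balanced, is correct and is implicitly how the paper's hypotheses are meant to be read.

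On the step you flag as delicate: your claim is true, but the reason deserves one explicit sentence, since it is precisely where the stated cyclic order of the half-lines enters. As the perpendicular direction rotates through a full turn, line $l_i$ produces two antipodal events with jumps $+\epsilon_i$ and $-\epsilon_i$, where $\epsilon_i=\abs{l_i^+\cap S}-\abs{l_i^-\cap S}$. Because the rays $l_1^+,\dots,l_k^+$ occupy an arc of length less than $\pi$ (this is what the order $l_1^+,\dots,l_k^+,l_1^-,\dots,l_k^-$ encodes), the $2k$ events occur in two antipodal blocks, in the order $+\epsilon_1,\dots,+\epsilon_k,-\epsilon_1,\dots,-\epsilon_k$; equivalently, no open half-plane through $x$ can pick out a strict ``middle interval'' of the $+$ rays. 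Within each block the signs alternate because the $\epsilon_i$ alternate, and the two block junctions alternate because $\epsilon_1=\epsilon_k$ (here $k$ odd is used), which yields the global $+,-,+,-,\dots$ pattern and closes the argument; without the block structure the alternation of the $\epsilon_i$ alone would not suffice. The remaining details (the parity argument pinning $g$ to $\{n-1,n\}$ via $g(\theta)+g(\theta+\pi)=2n-1$, and the event angles themselves, where $g$ only drops) are fine as you state them.
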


\begin{proof}
If the first condition holds, and Alice chooses $a=x$, then she claims at least half the points of $S$ on every even-balanced line, and therefore she wins.

If the second condition holds, and Alice chooses $a=x$, then she claims $x$, and also at least half the points of $S$ on every even-balanced line. Now consider the lines in $L^*$, and suppose they contain a total of $2N+1$ points of $S$ (excluding $x$). Using Observation \ref{obs_weak}, we see that, whatever the choice of $b$ is, Alice claims all the points on $k$ consecutive (in a rotational sense) half-lines from the set $\{l_1^+, \dots, l_k^+, l_1^-, \dots, l_k^-\}$. But our condition implies that these $k$ half-lines must contain at least $N$ points of $S$. Therefore, Alice claims at least $1+\frac{2n-(2N+2)}{2}+N = n$ points, and therefore she wins.
\end{proof}

\begin{figure}
    \centering
    
    \begin{tikzpicture}
    \draw[black, thick] (-3,-2) -- (3,2);
    \filldraw[black] (-1,-2/3) circle (1.5pt);
    \filldraw[black] (-1.5,-1) circle (1.5pt);
    \filldraw[black] (1.2,0.8) circle (1.5pt);
    \filldraw[black] (2.7,1.8) circle (1.5pt);
    
    \draw[black, thick] (1.5,-2) -- (-1.5,2);
    \filldraw[black] (1,-4/3) circle (1.5pt);
    \filldraw[black] (0.6,-0.8) circle (1.5pt);
    \filldraw[black] (-0.3,0.4) circle (1.5pt);
    \filldraw[black] (-1.2,1.6) circle (1.5pt);
    
    \draw[black, thick] (-1,-2) -- (1,2);
    \filldraw[black] (-0.9,-1.8) circle (1.5pt);
    \filldraw[black] (-0.8,-1.6) circle (1.5pt);
    \filldraw[black] (-0.6,-1.2) circle (1.5pt);
    \filldraw[black] (0.2,0.4) circle (1.5pt);
    \filldraw[black] (0.45,0.9) circle (1.5pt);
    \filldraw[black] (0.9,1.8) circle (1.5pt);
    
    \draw[black, thick] (-3,-1) -- (3,1);
    \filldraw[black] (-2,-2/3) circle (1.5pt);
    \filldraw[black] (1.5,0.5) circle (1.5pt);

    \draw[red, thick] (0,-2) -- (0,2);
    \filldraw[red] (0,-1.3) circle (1.5pt);
    \filldraw[red] (0,0.5) circle (1.5pt);
    \filldraw[red] (0,1.2) circle (1.5pt);

    \draw[red, thick] (-1.5,-2) -- (1.5,2);
    \filldraw[red] (0.75,1) circle (1.5pt);
    \filldraw[red] (0.9,1.2) circle (1.5pt);
    \filldraw[red] (1.4,5.6/3) circle (1.5pt);
    \filldraw[red] (-0.2,-0.8/3) circle (1.5pt);
    \filldraw[red] (-0.5,-2/3) circle (1.5pt);
    \filldraw[red] (-1,-4/3) circle (1.5pt);
    \filldraw[red] (-1.2,-1.6) circle (1.5pt);

    \draw[red, thick] (-3,-0.5) -- (3,0.5);
    \filldraw[red] (2,1/3) circle (1.5pt);

    \draw[red, thick] (-3,0.75) -- (3,-0.75);
    \filldraw[red] (2.5,-0.625) circle (1.5pt);
    \filldraw[red] (1.5,-0.375) circle (1.5pt);
    \filldraw[red] (-2,0.5) circle (1.5pt);
    \filldraw[red] (-1,0.25) circle (1.5pt);
    \filldraw[red] (-0.8,0.2) circle (1.5pt);

    \draw[red, thick] (-2,2) -- (2,-2);
    \filldraw[red] (0.8,-0.8) circle (1.5pt);
    \filldraw[red] (1.9,-1.9) circle (1.5pt);
    \filldraw[red] (-1.4,1.4) circle (1.5pt);
    
    \filldraw[black] (0,0) circle (1.5pt) node[anchor=west] {$a$};
\end{tikzpicture}

    \caption{A winning set for Alice with $\abs{S}$ even and $a \in S$.}
    \label{fig:even_2d}
\end{figure}
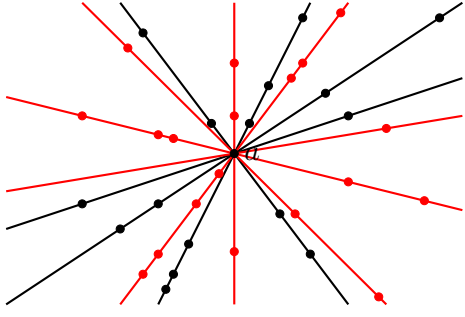

In fact, we will prove that, if the perfect lines are concurrent, then exactly one of the two conditions above must hold (depending on whether the common point of the perfect lines belongs to $S$ or not):

\begin{theorem}
\label{thm_even_2d}
When $d=2$ and $\abs{S}$ is even, Alice wins if and only if all the perfect lines are concurrent. In this case, the common point is a winning point for Alice.
\end{theorem}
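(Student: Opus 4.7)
The plan is to combine Observation \ref{obs_even_2d}, Lemma \ref{lem_2d_even_perfect}, and Proposition \ref{prop_even}.

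The ``only if'' direction is essentially immediate: if $a$ is Alice's winning point and some perfect line $l$ fails to contain $a$, then by Observation \ref{obs_even_2d} Bob wins by projecting $a$ onto $l$, a contradiction. Hence every perfect line passes through $a$, so the perfect lines are concurrent.

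For the ``if'' direction, suppose all perfect lines pass through a common point $x$. I would show Alice wins at $a=x$ by verifying the structural condition of Proposition \ref{prop_even} appropriate to whether $x\in S$ or $x\notin S$; the proposition then delivers the conclusion. The substantive step is to show that concurrency alone forces the structural condition, and I would argue by contradiction. If $x\notin S$, failure of the structural condition means some perfect line $l^*$ through $x$ has $u\neq v$ (WLOG $u>v$), where $u=\abs{l^{*+}\cap S}$ and $v=\abs{l^{*-}\cap S}$. If $x\in S$, it means either some perfect line through $x$ is not balanced, or the set of odd-balanced perfect lines fails the clockwise-alternation condition. In each case the goal is to construct a new perfect line that does \emph{not} pass through $x$, contradicting concurrency.

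A key general tool here is that by Lemma \ref{lem_2d_even_perfect} together with concurrency, every line of the form $xy$ with $y\in S$ is itself perfect, which imposes strong collinearity constraints on $S$ relative to $x$. The hard step I anticipate is the construction of this non-concurrent perfect line: for even $\abs{S}$ a perfect line must pass through at least two points of $S$, so simply translating $l^*$ parallel to itself, or rotating $l^*$ about a single point of $l^{*+}\cap S$, does not in general yield a perfect line — the counts in the two half-planes typically fail to transition into the perfect range (both $<n$) at the precise instant the moving line meets two $S$-points simultaneously. I would combine such motions, choosing the pivot and direction of rotation or translation based on where the imbalance between $l^{*+}$ and $l^{*-}$ concentrates, and run a case analysis tracking how the half-plane counts evolve until a perfect line disjoint from $x$ is produced. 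Once this non-concurrent perfect line is in hand the contradiction is complete, so the structural condition holds, and Proposition \ref{prop_even} finishes the argument.
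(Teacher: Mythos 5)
Your overall architecture matches the paper's: Observation \ref{obs_even_2d} gives the ``only if'' direction, and for the ``if'' direction one assumes the relevant condition of Proposition \ref{prop_even} fails and derives a perfect line missing $x$, contradicting concurrency. Your side remark is also correct: by Lemma \ref{lem_2d_even_perfect} and concurrency, every line $xy$ with $y\in S\setminus\{x\}$ is perfect, so the pencil of perfect lines through $x$ covers $S\setminus\{x\}$ (this is implicitly needed for the counting). However, the proposal stops exactly where the proof begins: the construction of the non-concurrent perfect line, which is the entire substance of the theorem beyond Proposition \ref{prop_even}, is only described as ``combine such motions \dots and run a case analysis,'' with no argument that any such motion terminates at a perfect line. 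You yourself identify the obstruction (the half-plane counts can jump past the window $<n$ when the moving line meets several points of $S$ at once), but you do not resolve it, so as it stands this is a genuine gap, not a proof.

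For comparison, the paper resolves it in two steps. First, every way the structural condition can fail (a non-even-balanced perfect line when $x\notin S$; some $\abs{u_i-v_i}\geq 2$, or two consecutive odd-balanced lines with their extra point on the same side, when $x\in S$) is reduced by a short counting argument to a single quantitative statement: some set of $t$ consecutive half-lines of the pencil, say $l_1^+,\dots,l_t^+$, carries at least $n+1$ points of $S$. Second, since each $l_i$ is perfect, one gets $u_1\geq 2$ and $u_t\geq 2$; the paper then pivots about $p$, the point of $S$ on $l_t^+$ nearest $x$, and sweeps clockwise through the lines $s_1,s_2,\dots$ spanned by $p$ and the other points on $l_1^+\cup\dots\cup l_{t-1}^+$, stopping at the minimal index $j$ for which the count to the left of $s_j$ reaches $n$. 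Minimality bounds the left open half-plane by $n-1$, and the choice of $j$ bounds the right open half-plane by $n-1$; the key point, which answers your worry about simultaneous crossings, is that perfection only constrains the two \emph{open} half-planes, so $s_j$ may absorb arbitrarily many points of $S$ without spoiling the argument. Without this (or an equivalent) explicit construction, your ``if'' direction is unproven.
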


\begin{proof}
    If the perfect lines are not concurrent, then Observation \ref{obs_even_2d} implies that Bob wins.
    
    Suppose now that the perfect lines are concurrent, say at $x$, and let $\abs{S}=2n$. Assume that neither condition from Proposition \ref{prop_even} holds. Let $L = \{l_1,\dots,l_t\}$ be the set of perfect lines labeled in a clockwise order, and define $l_i^+, l_i^-, u_i$, and $v_i$ as in the proof of Lemma \ref{lem_2d_even_perfect}. 
    
    We first consider the case $x \notin S$. Note that, if every set of $t$ consecutive (in a rotational sense) half-lines from $L$ contains a total of exactly $n$ points of $S$, then $u_i=v_i$ for all $i$, which means that the first condition from Proposition \ref{prop_even} holds, contradicting our assumption. We can thus assume, without loss of generality, that $u_1+\dots+u_t \geq n+1$. 
    
    Now consider the case $x \in S$. Suppose that there exists some $i$ for which $\abs{u_i-v_i} \geq 2$. Without loss of generality, suppose that $u_1 \geq v_1+2$, and also that $u_2+\dots+u_t \geq v_2+\dots+v_t$. Then $2(u_1+\dots+u_t) \geq (u_1+\dots+u_t) + (v_1+2)+(v_2+\dots+v_t) = 2n+1$, and thus $u_1+\dots+u_t \geq n+1$.
    
    Now suppose that $\abs{u_i-v_i} \leq 1$ for all $i$. Let $L^*=\{l_1',\dots,l_k'\}$ be the set of odd-balanced perfect lines, labeled in a clockwise order. Note that $k$ must be odd. Let $u_i'$ and $v_i'$ have the obvious meaning, and let $T' \coloneqq u_1'+\dots u_k'+v_1'+\dots+v_k'$. Since $u_i'+v_i'$ is odd for every $i$, and $k$ is odd, we deduce that $T'$ is odd. Since we are assuming that the second condition from Proposition \ref{prop_even} is not satisfied, there exist two consecutive lines in $L^*$ that both have the `extra' point of $S$ on the same side of $x$ (either both to the left of $x$, or both to the right of $x$). Without loss of generality, suppose that $u_1'=v_1'+1$, $u_2'=v_2'+1$, and also that $u_3'+\dots+u_k' \geq v_3'+\dots+v_k'$. Then $2(u_1'+\dots+u_k') \geq (u_1'+\dots+u_k')+(v_1'+v_2'+2)+(v_3'+\dots+v_k')=T'+2$. Since $T'$ is odd, we get $u_1'+\dots+u_k' \geq \frac{T'+3}{2}$. Since every line $l_i \notin L^*$ satisfies $u_i=v_i$, we can find $t$ consecutive half-lines from $L$ that contain at least $\frac{T'+3}{2} + \frac{2n-1-T'}{2} = n+1$ points of $S$ (excluding $x$), so again, without loss of generality, $u_1+\dots+u_t \geq n+1$.

    Therefore, we have, in any case, that $U \coloneqq u_1+\dots+u_t \geq n+1$. Since we also know that $u_2+\dots+u_t \leq n-1$, and $u_1+\dots+u_{t-1} \leq n-1$, we deduce that $u_1 \geq 2$, and $u_t \geq 2$. Let $p$ be the point of $S$ on $l_t^+$ that lies closest to $x$, and let $q$ be the point of $S$ on $l_1^+$ that lies farthest away from $x$, as shown in Figure \ref{fig:even_perfect}. Let $s_1,\dots,s_z$ be the lines that contain $p$ and at least one other point of $S$ that belongs to $l_i^+$ for some $i \in \{1,\dots,t-1\}$. The $s_i$ are labeled in a clockwise order. Let $w_i$ be the number of points of $S$ on $s_i$, not counting $p$.

     Let $s_{i_*}$ be the line through $p$ and $q$. We will show that $s_i$ is a perfect line for some $i \leq i_*$, contradicting the fact that every perfect line passes through $x$. We start with the inequality $2n-U \leq n-1$. Let $j \in \{1,\dots,i_*\}$ be smallest such that $2n-U+w_1+\dots+w_j \geq n$. Note that $w_1+\dots+w_{i_*} \geq u_1$ (since $i_* \geq u_1$), and thus $2n-U+w_1+\dots+w_{i_*} \geq 2n-(u_2+\dots+u_t) \geq n+1>n$, so such $j$ does exist. We show that the line $s_j$ is perfect. Indeed, the number of points of $S$ to the left of $s_j$ is $2n-U+w_1+\dots+w_{j-1} \leq n-1$, by minimality of $j$. Also, the number of points of $S$ to the right of $s_j$ is $2n - (2n-U+w_1+\dots+w_{j-1}) - (w_j+1) = U-(w_1+\dots+w_j)-1 \leq n-1$, by the choice of $j$.
\end{proof}

\begin{figure}
    \centering
    
    \begin{tikzpicture}
    \draw[black, thick] (0,0) -- (2,3) node[anchor=west] {$l_1^+$};
    \filldraw[black] (0.9,1.35) circle (1.5pt);
    \filldraw[black] (1.2,1.8) circle (1.5pt);
    \filldraw[black] (1.5,2.25) circle (1.5pt) node[anchor=west] {$q$};
    
    \draw[black, thick] (0,0) -- (3,2);
    \filldraw[black] (0.45,0.3) circle (1.5pt);
    \filldraw[black] (1.8,1.2) circle (1.5pt);
        
    \draw[black, thick] (0,0) -- (3,1);
    \filldraw[black] (0.99,0.33) circle (1.5pt);
    \filldraw[black] (2.4,0.8) circle (1.5pt);
    
    \draw[black, thick] (0,0) -- (2.25,-3);
    \filldraw[black] (1,-4/3) circle (1.5pt);
    \filldraw[black] (0.6,-0.8) circle (1.5pt);
    
    \draw[black, thick] (0,0) -- (3,-2.4);
    \filldraw[black] (2,-1.6) circle (1.5pt);

    \draw[black, thick] (0,0) -- (0,-3) node[anchor=west] {$l_t^+$};
    \filldraw[black] (0,-1.2) circle (1.5pt) node[anchor=east] {$p$};
    \filldraw[black] (0,-2) circle (1.5pt);

    \draw[black, thick] (-0.54,-3) -- (1.26,3) node[anchor=east] {$s_1$};

    \draw[black, thick] (-18/23,-3) -- (42/23,3) node[anchor=south] {$s_{i_*}$};
    
    \filldraw[black] (0,0) circle (1.5pt) node[anchor=east] {$x$};
\end{tikzpicture}

    \caption{Finding a perfect line that does not contain $x$.}
    \label{fig:even_perfect}
\end{figure}
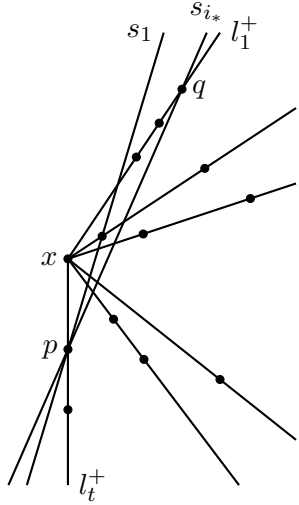

\section{The result in higher dimensions}

Given any choice of $a$, and any hyperplane $H$ through $a$, Bob can always claim all the points of $S$ in $H^+$, by choosing a point $b \in H^+$ sufficiently close to $a$ on the perpendicular to $H$ at $a$. We will prove a strengthening of this fact, which says that, if $K$ is an affine subspace contained in $H$, then Bob can additionally claim a lot of points on $K$. We will later apply this result with $K$ being either a line or a plane.

\begin{lemma}
\label{lem_at_least_half}
    Let $K \subset \mathbb{R}^d$ be an affine subspace of dimension $j \leq d-1$ containing $a$. Let $K' \subset K$ be an affine subspace of dimension $j-1$ containing $a$, and let $K^+ \subset K$ be one of the two open half-spaces of $K$ defined by $K'$. Let $H$ be a hyperplane that contains $K$. Then Bob can claim all the points of $S$ that belong to $H^+ \cup K^+$.
\end{lemma}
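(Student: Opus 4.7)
The plan is to have Bob choose $b$ as a carefully weighted small perturbation of $a$ in two orthogonal directions: one pushing into $H^+$ and the other pushing into $K^+$ within $K$. Let $n_H$ be the unit normal to $H$ pointing into $H^+$, and let $n_K$ be the unit vector lying in the direction space of $K$, perpendicular to $K'$, and pointing into $K^+$. Since $K \subseteq H$, we have $n_H \perp n_K$. For small $\epsilon_1, \epsilon_2 > 0$ to be specified, Bob picks $b = a + \epsilon_1 n_H + \epsilon_2 n_K$, so that $\norm{b-a}^2 = \epsilon_1^2 + \epsilon_2^2$.

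Recall that a voter $x \in S$ votes for Bob if and only if $\langle x - a, b - a\rangle > \tfrac{1}{2}\norm{b-a}^2$, which here becomes
\[ \epsilon_1 \langle x-a, n_H\rangle + \epsilon_2 \langle x-a, n_K\rangle > \tfrac{1}{2}(\epsilon_1^2 + \epsilon_2^2). \qquad (\star) \]
For $x \in K^+$ we have $\langle x-a, n_H\rangle = 0$ (since $K \subseteq H$) and $\langle x-a, n_K\rangle > 0$, so the left-hand side of $(\star)$ is linear in $\epsilon_2$ and will dominate the right-hand side once $\epsilon_1^2 \ll \epsilon_2$. For $x \in H^+$ we have $\langle x-a, n_H\rangle > 0$, while $\langle x-a, n_K\rangle$ may be negative; the $\epsilon_1$-term then overwhelms the $\epsilon_2$-term provided $\epsilon_2 \ll \epsilon_1$.

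The only subtlety is reconciling these two requirements: we need $\epsilon_1^2 \ll \epsilon_2 \ll \epsilon_1$, which is possible for any sufficiently small $\epsilon_1$ by setting, for example, $\epsilon_2 = \epsilon_1^{3/2}$. Using that $S$ is finite — so that $\alpha_{\min} \coloneqq \min_{x \in H^+ \cap S} \langle x-a, n_H\rangle$ and $\beta_{\min} \coloneqq \min_{x \in K^+ \cap S} \langle x-a, n_K\rangle$ are both strictly positive, and $\max_{x \in H^+ \cap S} \abs{\langle x-a, n_K\rangle}$ is finite — a direct substitution then shows that $(\star)$ holds for every $x \in (H^+ \cup K^+) \cap S$ once $\epsilon_1$ is taken small enough. (If either $H^+ \cap S$ or $K^+ \cap S$ is empty, the corresponding constraint is vacuous.) There is no real obstacle beyond this choice of perturbation scales; the rest of the verification is a routine calculation.
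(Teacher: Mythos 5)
Your proof is correct and is essentially the paper's argument in a different guise: the paper tilts $H$ to a hyperplane with normal $n_H+\epsilon\, n_K$ and has Bob step slightly into its positive side, which amounts to exactly your choice $b=a+\epsilon_1 n_H+\epsilon_2 n_K$ with $\epsilon_1^2\ll\epsilon_2\ll\epsilon_1$. The only difference is presentational: you verify the perpendicular-bisector inequality directly, while the paper invokes the previously stated fact that Bob can claim every point of $S$ strictly inside an open half-space bounded by a hyperplane through $a$.
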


\begin{proof}
    Without loss of generality, let $a$ be the origin, $K=\textrm{span} \{e_1,\dots,e_j\}$, $K'=\textrm{span} \{e_2,\dots,e_j\}$, $K^+ = \{x \in K : x_1>0\}$, and $H=\{ x \in \mathbb{R}^d: \langle x,e_d \rangle = 0\}$. Let $H_1 = \{ x \in \mathbb{R}^d: \langle x,e_d + \epsilon e_1 \rangle = 0\}$, for some small $\epsilon>0$.

    If $x \in K^+ \cap S$, then $x_d=0$, so $\langle x,e_d + \epsilon e_1 \rangle = \epsilon x_1 >0$, and therefore $x \in H_1^+$. If $x \in H^+ \cap S$, then $x_d>0$, so $\langle x,e_d + \epsilon e_1 \rangle = x_d + \epsilon x_1 > 0$, for sufficiently small $\epsilon$, and therefore $x \in H_1^+$. Since we only need to deal with finitely many points on $H^+ \cap S$, some small enough $\epsilon$ will work for all of them. We have shown that $(H^+ \cup K^+) \cap S \subset H_1^+$. Since Bob can claim all the points of $S$ that belong to $H_1^+$, we are done.
\end{proof}

\subsection{Odd number of voters}

As in the two-dimensional case, we begin by stating that Alice has to play on every perfect hyperplane, and we also show that a lot of perfect hyperplanes exist:

\begin{obs}
\label{obs_odd_general}
If the hyperplane $H$ is perfect (or just good), and $a \notin H$, then Bob wins by choosing $b$ to be the projection of $a$ onto $H$.
\end{obs}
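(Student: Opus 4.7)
The plan is to directly generalise Observation~\ref{obs_odd_2d} to arbitrary dimension, using the higher-dimensional analogue of Observation~\ref{obs_perp_bis}: for any choice of $a$ and $b$, Alice claims exactly the points of $S$ lying in the open half-space defined by the perpendicular bisector hyperplane of the segment $ab$ on the $a$-side. This is immediate from the definition of $V_A$, since $\rho(a,x)<\rho(b,x)$ iff $x$ lies strictly on $a$'s side of that perpendicular bisector.

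Given a good hyperplane $H$ with $a \notin H$, I would have Bob choose $b$ to be the orthogonal projection of $a$ onto $H$. Then, without loss of generality, $a \in H^+$, and $b \in H$. The perpendicular bisector $M$ of $ab$ is parallel to $H$ and lies strictly between $a$ and $H$ (at the midpoint). Consequently, the open $a$-side of $M$ is a proper subset of $H^+$, while the open $b$-side of $M$ contains all of $H^-$, all of $H$, and part of $H^+$. Therefore
\[
V_A \leq \abs{H^+ \cap S} \quad \text{and} \quad V_B \geq \abs{H^- \cap S} + \abs{H \cap S}.
\]

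To conclude, I would use the two defining features of a good hyperplane in the odd case: $\abs{H^+ \cap S} = \abs{H^- \cap S}$, and $\abs{H \cap S} \geq 1$ (the latter forced by $\abs{S}$ being odd). Combining these gives $V_B \geq \abs{H^+ \cap S} + 1 > V_A$, so Bob wins. Note that perfectness of $H$ is not needed; goodness suffices, which is exactly what the observation claims. There is no real obstacle here beyond checking that Observation~\ref{obs_perp_bis} carries over verbatim to $\mathbb{R}^d$, which it does with the same one-line justification.
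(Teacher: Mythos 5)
Your argument is correct and is exactly the justification the paper intends: the statement is left as an observation precisely because it follows immediately from the perpendicular-bisector description of Alice's claimed set (the higher-dimensional analogue of Observation~\ref{obs_perp_bis}), with the bisector parallel to $H$ so that Bob claims all of $H \cup H^-$, and goodness plus $\abs{H \cap S} \geq 1$ (forced by $\abs{S}$ odd) giving $V_B > V_A$. No gaps; your note that only goodness, not perfectness, is needed matches the parenthetical in the statement.
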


\begin{lemma}
\label{lem_odd_perfect}
Let $c \in \mathbb{R}^d$ be non-zero. A perfect hyperplane of the form $\{ x \in \mathbb{R}^d: \langle x,c \rangle = \lambda\}$ (for some $\lambda \in \mathbb{R}$) exists if, for any distinct $x_1, x_2 \in S$, $\langle x_1,c \rangle \neq \langle x_2,c \rangle$.
\end{lemma}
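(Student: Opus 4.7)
The plan is straightforward: this is the higher-dimensional analogue of the argument in Lemma \ref{lem_2d_odd_perfect}, and we just take the median of the inner products $\langle y, c\rangle$ for $y \in S$.

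First I would write $\abs{S} = 2k+1$ (using the standing assumption that $\abs{S}$ is odd) and consider the multiset $\{\langle y, c\rangle : y \in S\}$. By hypothesis its elements are pairwise distinct, so they can be listed as $\mu_1 < \mu_2 < \dots < \mu_{2k+1}$, each corresponding to a unique point of $S$. Next I would set $\lambda \coloneqq \mu_{k+1}$ and consider the hyperplane $H \coloneqq \{x \in \mathbb{R}^d : \langle x, c\rangle = \lambda\}$.

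Then I would verify the two defining properties of a perfect hyperplane. Since the $\mu_i$ are distinct, exactly one point of $S$ — namely the one corresponding to $\mu_{k+1}$ — satisfies $\langle y, c\rangle = \lambda$, so $\abs{H \cap S} = 1$. On the other hand, the points corresponding to $\mu_1, \dots, \mu_k$ lie in $H^-$ and those corresponding to $\mu_{k+2}, \dots, \mu_{2k+1}$ lie in $H^+$, giving $\abs{H^+ \cap S} = \abs{H^- \cap S} = k$, so $H$ is good. Combined with $\abs{H \cap S} = 1$, this shows $H$ is perfect.

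There is no real obstacle here; the content of the lemma is just the observation that the distinctness condition on inner products reduces the construction of a perfect hyperplane to the one-dimensional median argument already used in the proof of Lemma \ref{lem_2d_odd_perfect}. The only thing worth noting is that the argument also shows the slightly stronger statement that, for \emph{generic} $c$ (any $c$ avoiding the finitely many hyperplanes $\{c : \langle x_1 - x_2, c\rangle = 0\}$ for $x_1 \ne x_2$ in $S$), a perfect hyperplane with normal $c$ exists — a fact that will presumably be used later in the same way it was used in two dimensions.
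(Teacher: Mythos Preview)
Your proof is correct and is exactly the paper's approach: take $\lambda$ to be the median of the (distinct) inner products $\langle y,c\rangle$ for $y\in S$, which immediately gives a perfect hyperplane. The paper states this in one line, but your expanded verification is fine.
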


\begin{proof}
As previously observed, if $S = \{x_1,\dots,x_n\}$, then taking $\lambda$ to be the median of $\langle x_1,c \rangle,\dots,\langle x_n,c \rangle$ works.
\end{proof}

\begin{proof}[Proof of Theorem \ref{thm_odd_general}.]
Suppose there exists $x$ satisfying (\ref{condition1_odd}). Alice chooses $a=x$. Then, whatever the choice of $b$ is, we can think of $a$ as being the projection of $b$ onto some good hyperplane, showing that Alice wins.

If Alice wins, then Observation \ref{obs_odd_general} implies that (\ref{condition2_2d_odd}) necessarily holds.

Now suppose there exists a point $x$ satisfying (\ref{condition2_odd}).  We will show that $x$ also satisfies (\ref{condition1_odd}). Without loss of generality, suppose $x$ is the origin, and assume that the hyperplane given by $\langle x,e_1 \rangle=0$ is not good. By changing our coordinate system if necessary, we can further assume that no two points of $S$ agree in both the first and second coordinates. 

Let $\abs{S} = n$, and consider hyperplanes of the form $\langle x,e_1+\epsilon e_2 \rangle=0$. We fix distinct $x,y \in S$, and say that $\epsilon \neq 0$ is {\em bad} for $\{x,y\}$ if $\langle x-y,e_1+\epsilon e_2 \rangle = 0$. If $x$ and $y$ agree in the second coordinate, and $\epsilon$ is bad for $\{x,y\}$, then $x$ and $y$ also agree in the first coordinate, which is not possible. If $x$ and $y$ differ in the second coordinate, and both $\epsilon$ and $\epsilon'$ are bad for $\{x,y\}$, then $(\epsilon-\epsilon') \langle x-y,e_2 \rangle=0$, which implies that $\epsilon = \epsilon'$. Therefore, there is at most one bad value of $\epsilon$ for any pair of points of $S$, and thus at most finitely many values of $\epsilon$ for which $\langle x,e_1+\epsilon e_2 \rangle$ takes repeated values as $x$ varies over $S$.

 Therefore, using Lemma \ref{lem_odd_perfect}, we can find some $\epsilon>0$ such that both $H_1 = \{x \in \mathbb{R}^d:\langle x,e_1 + \epsilon e_2 \rangle=0\}$ and $H_2 = \{x \in \mathbb{R}^d:\langle x,e_1-\epsilon e_2 \rangle=0\}$ are perfect. We can further ensure that $\epsilon$ is sufficiently small, so that, for any $x \in S$, if $\langle x,e_1 \rangle>0$, then $\langle x,e_1+\epsilon e_2 \rangle>0$, and if $\langle x,e_1 \rangle<0$, then $\langle x,e_1+\epsilon e_2 \rangle<0$. Suppose that $k$ points of $S$ satisfy $\langle x,e_1 \rangle>0$, and $k'$ points of $S$ satisfy $\langle x,e_1 \rangle<0$, and also that, among those points of $S$ that satisfy $\langle x,e_1 \rangle=0$, $p$ satisfy $\langle x,e_2 \rangle>0$, and $p'$ satisfy $\langle x,e_2 \rangle<0$. Both $H_1$ and $H_2$ being perfect implies that $k+p=k'+p'$, and $k+p'=k'+p$, so $k=k'$ (and $p=p'$). But this contradicts $\langle x,e_1 \rangle=0$ not being good.

 If (\ref{condition3_odd}) holds, then Alice wins by picking $a=x$. Indeed, whatever the choice of $b$ is, for every line $l$ through $x$, Alice either claims all the points of $S$ on $l^+$, or she claims all the points of $S$ on $l^-$, and she also claims $x$. This guarantees a total of more than half of the votes.

 Finally, suppose no point of $S$ satisfies (\ref{condition3_odd}), and Alice picks some point $x \in S$. We find some line $l$ through $x$ that is not even-balanced. Suppose that $l^+$ contains more points of $S$ than $l^-$ does. We now find some hyperplane $H$ that contains $l$ such that $(H \setminus l) \cap S = \emptyset$. Suppose $H^+$ contains at least as many points of $S$ as $H^-$ does. Using Lemma \ref{lem_at_least_half}, with $K=l$, shows that Bob can claim all the points of $S$ in $l^+ \cup H^+$, which guarantees more than half of the votes for him.
\end{proof}

\subsection{Even number of voters}
 Once again, (the analogue of) Observation \ref{obs_odd_general} holds true:
 
\begin{obs}
\label{obs_even_general}
If the hyperplane $H$ is perfect, and $a \notin H$, then Bob wins by choosing $b$ to be the projection of $a$ onto $H$.
\end{obs}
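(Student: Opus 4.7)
The plan is to mimic the odd-case argument, but using the stronger definition of \emph{perfect} available when $|S|$ is even (both open half-spaces contain strictly fewer than $|S|/2$ points). The extra slack is exactly what allows Bob to get away with ``wasting'' the points of $S \cap H$ as ties rather than wins: since $|H^- \cap S| < |S|/2$ and $|H^+ \cap S| < |S|/2$, we automatically have $|H \cap S| \geq 2$, and Bob will be able to absorb all of $H \cap S$ into his own side of the perpendicular bisector.

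Concretely, I would set up coordinates so that $H = \{x \in \mathbb{R}^d : x_d = 0\}$, and, after reversing the roles of $H^+$ and $H^-$ if necessary, assume $a = -\delta e_d$ for some $\delta > 0$, so that $b$, the projection of $a$ onto $H$, is the origin. A direct comparison of $\rho(a,x)^2 = \sum_{i<d} x_i^2 + (x_d+\delta)^2$ and $\rho(b,x)^2 = \sum_{i<d} x_i^2 + x_d^2$ shows that $\rho(a,x) > \rho(b,x)$ precisely when $x_d > -\delta/2$, i.e.\ Bob claims exactly the points of $S$ lying in the open half-space $\{x_d > -\delta/2\}$. In particular, Bob claims every point of $S$ with $x_d \geq 0$, which is the whole of $(H^+ \cup H) \cap S$.

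Consequently,
\[
V_B \;\geq\; |H^+ \cap S| + |H \cap S| \;=\; |S| - |H^- \cap S| \;>\; |S|/2,
\]
where the strict inequality uses the perfectness of $H$. Since $V_A + V_B \leq |S|$, this forces $V_A < |S|/2 < V_B$, so Bob wins. (Note also that $b \neq a$ because $a \notin H$ while $b \in H$, so Bob's move is legal.)

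I do not expect any real obstacle; the only thing to be careful about is to use the even-case definition of \emph{perfect} in the form $|H^- \cap S| < |S|/2$ (rather than $\leq$), which is what produces the strict inequality $V_B > |S|/2$ and hence a Bob win even when $H \cap S$ is large.
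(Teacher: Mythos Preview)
Your proof is correct and is exactly the argument the paper has in mind: it states this fact as an observation without proof, relying on the idea (already set up in Observation~\ref{obs_perp_bis} in the two-dimensional case) that the perpendicular bisector of $ab$ is a translate of $H$ lying strictly between $a$ and $H$, so Bob claims all of $(H \cup H^+)\cap S$; the strict inequality $|H^- \cap S| < |S|/2$ from the even-case definition of \emph{perfect} then finishes it. Your coordinate computation spells this out cleanly.
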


It is possible to follow the idea of the proof of Lemma \ref{lem_2d_even_perfect} in order to prove the existence of perfect hyperplanes through any $x \in S$. However, we will in fact use the two-dimensional result in order to prove a stronger result, which states that we can further ask for the perfect hyperplane to contain any $(d-2)$-dimensional affine subspace containing $x$.

\begin{lemma}
\label{even_perfect_strong}
    Let $S$ be a set of $2n$ points in $\mathbb{R}^d$. Let $x$ be any point of $S$, and let $K$ be an affine subspace of dimension $d-2$ that contains $x$. Then there exists a perfect hyperplane $H$ that contains $K$.
\end{lemma}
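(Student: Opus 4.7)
The plan is to reduce the problem to Lemma \ref{lem_2d_even_perfect} via orthogonal projection onto a two-dimensional plane. Let $V$ be the direction space of $K$ (a $(d-2)$-dimensional linear subspace of $\mathbb{R}^d$), let $P = x + V^{\perp}$ be the two-dimensional affine plane through $x$ perpendicular to $K$, and let $\pi:\mathbb{R}^d\to P$ be the orthogonal projection, so $\pi(x)=x$ and $\pi(y)=x$ for every $y\in K$. The first step is to verify the bijection: each hyperplane $H$ containing $K$ is uniquely determined by the line $l := H\cap P$ through $x$, and conversely every line through $x$ in $P$ extends to a unique hyperplane containing $K$. Moreover, if $c\in V^{\perp}$ is a normal vector to $H$, then for every $y\in\mathbb{R}^d$ we have $\langle y-x,c\rangle = \langle \pi(y)-x,c\rangle$ (since $y-\pi(y)\in V$ and $c\perp V$), so $y\in H^{+}$ if and only if $\pi(y)$ lies in the corresponding open half-plane of $P$ cut out by $l$. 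Consequently, $\abs{H^{\pm}\cap S}$ equals the number of points of the multiset $\tilde S := \pi(S)$ (counted with multiplicity) lying in each of the two open half-planes of $P$ cut out by $l$.

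The problem thus reduces to finding a line $l$ through $x$ in $P$ such that each of the two open half-planes cut out by $l$ contains fewer than $n$ points of $\tilde S$. The multiset $\tilde S$ has total size $2n$, and $x$ appears in $\tilde S$ with multiplicity $m := \abs{K\cap S} \geq 1$, since $x\in K\cap S$. The next step is to adapt the proof of Lemma \ref{lem_2d_even_perfect} to this multiset setting: the definitions of the lines $l_1,\dots,l_L$ through $x$, the half-lines $l_i^{\pm}$, the integers $u_i, v_i, T_i, T_i'$, and the total $T := \sum_i(u_i+v_i)$ all carry over verbatim, the only change being that $T$ now equals $2n-m$ rather than $2n-1$. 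Since $m\geq 1$, we still have $T\leq 2n-1 < 2n$, and the cascade of inequalities from the original proof---assume for contradiction that every $l_i$ satisfies $T_i\geq n$ or $T_i'\geq n$, take $T_1\geq n$ without loss of generality, and deduce successively $T_L'\geq n$, $T_{L-1}'\geq n$, $\ldots$, $T_2'\geq n$, finally arriving at a contradiction with $T_1\geq n$---proceeds unchanged.

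The main subtlety is the bookkeeping of the reduction: points of $S$ lying in $K$ itself all project to $x$, and so contribute to neither side of any line $l$ through $x$ in $P$. Once $\tilde S$ is correctly set up as a multiset absorbing these contributions into the multiplicity at $x$, the two-dimensional argument goes through with only the minor numerical adjustment from $2n-1$ to $2n-m$ in the value of $T$.
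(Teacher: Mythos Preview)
Your proof is correct and follows essentially the same approach as the paper: project orthogonally onto the two-dimensional plane through $x$ perpendicular to $K$, and invoke Lemma~\ref{lem_2d_even_perfect} on the resulting multiset. The paper handles the multiset issue with a one-line remark (``noting that $S'$ possibly being a multiset does not affect our proof''), whereas you spell out explicitly how the key total becomes $T = 2n - m \leq 2n-1$ and why the cascade argument still runs; this is a welcome elaboration rather than a different method.
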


\begin{proof}
    For simplicity, we take $x$ to be the origin, and $K = \textrm{span}\{e_3, \dots, e_d\}$. Then we need to find a perfect hyperplane $H$ of the form $\{x \in \mathbb{R}^d : \lambda x_1 - \mu x_2 = 0 \}$, for some $(\lambda,\mu) \in \mathbb{R}^2 \setminus \{(0,0)\}$. Let $\Pi:\mathbb{R}^d \to \mathbb{R}^2$ be given by $\Pi(x_1, x_2, \dots, x_d) = (x_1, x_2, 0, \dots, 0)$. In other words, $\Pi$ denotes the projection onto the two-dimensional plane $K^\bot = \textrm{span}\{e_1,e_2\}$. Let $S'$ be the image of $S$ under $\Pi$. Then we can apply Lemma \ref{lem_2d_even_perfect} to find a perfect line $l$ through the origin in the plane, noting that $S'$ possibly being a multiset does not affect our proof. Let $l$ be given by $\lambda x_1-\mu x_2 = 0$, for some $(\lambda,\mu) \in \mathbb{R}^2 \setminus \{(0,0)\}$. Then the hyperplane given by the same equation in $\mathbb{R}^d$ is also perfect.
\end{proof}

In order to show that a winning point for Alice should satisfy condition (\ref{condition2_even}), we will use (a corollary of) the Sylvester-Gallai theorem, which was posed as a problem by Sylvester in 1893 and proved by Gallai in 1944 \cite{gallai}, and states the following:

\begin{theorem}
    \label{thm_syl_gallai}
    Let $S$ be a finite set of points on a plane, such that any line that passes through two points of $S$ also contains a third point of $S$. Then all the points of $S$ lie on a single line.

\end{theorem}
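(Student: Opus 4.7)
The plan is to give the classical proof by Kelly via an extremal argument on point-to-line distances. I argue by contradiction: suppose that not all points of $S$ are collinear, and consider the finite, non-empty collection $\mathcal{P}$ of pairs $(p,l)$ in which $l$ is a line containing at least two points of $S$ and $p\in S\setminus l$. This set is non-empty precisely because $S$ is not contained in a single line. Choose a pair $(p_0, l_0)\in\mathcal{P}$ that minimizes the perpendicular distance $d(p,l)$, and let $f$ denote the foot of the perpendicular from $p_0$ to $l_0$.

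By the hypothesis of the theorem, $l_0$ contains at least three points of $S$. Hence, by pigeonhole, at least two of these points, say $q_1$ and $q_2$, lie on a common closed half-line of $l_0$ determined by $f$, with $q_1$ no farther from $f$ than $q_2$ is (so $q_2\neq f$, while possibly $q_1=f$). Now I consider the new line $l'\coloneqq p_0 q_2$; it contains the two distinct points $p_0, q_2 \in S$, and an inspection of the positions of $q_1, q_2, f, p_0$ shows $q_1\notin l'$, so $(q_1, l')\in\mathcal{P}$.

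The decisive step is to compare $d(q_1, l')$ with $d(p_0, l_0)$. Letting $g$ denote the foot of the perpendicular from $q_1$ to $l'$, the right triangles $p_0 f q_2$ and $q_1 g q_2$ share the angle at $q_2$, so they are similar, giving
\[
\frac{d(q_1,l')}{d(p_0,l_0)} \;=\; \frac{|q_1 g|}{|p_0 f|} \;=\; \frac{|q_1 q_2|}{|p_0 q_2|} \;<\; 1.
\]
This strict inequality contradicts the minimality of $(p_0, l_0)$, completing the proof.

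The main (minor) obstacle is the degenerate case $q_1=f$, in which the similar-triangles argument above collapses. Here one instead computes $|q_1 g|$ directly using the altitude-on-hypotenuse formula in the right triangle $p_0 f q_2$, obtaining $|q_1 g|=|p_0 f|\cdot|f q_2|/|p_0 q_2|<|p_0 f|$, and the same contradiction is reached. Beyond this small case distinction the argument is entirely elementary plane geometry, and no deeper input is needed.
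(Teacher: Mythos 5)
Your proof is correct: it is the classical extremal argument of Kelly (minimize the point--line distance over all pairs consisting of a connecting line and a point of $S$ off that line, then produce a strictly smaller distance). Note that the paper itself does not prove this statement at all --- it quotes the Sylvester--Gallai theorem as a known result with a citation to Gallai --- so there is no in-paper argument to compare against; your write-up simply supplies the standard self-contained proof. Two small remarks. First, the ``degenerate'' case $q_1=f$ does not actually require separate treatment: the similar-triangle (or sine-ratio) computation $d(q_1,l')=\abs{q_1q_2}\sin\theta<\abs{p_0q_2}\sin\theta=d(p_0,l_0)$ only breaks down if $q_1=q_2$, which is excluded since $q_1,q_2$ are distinct points of $S$; your altitude-formula fallback is fine but unnecessary. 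Second, the pigeonhole step and the claim $q_1\notin l'$ are both correct as stated ($l'\neq l_0$ because $p_0\notin l_0$, so $l'$ meets $l_0$ only at $q_2$), so the argument is complete.
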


\begin{cor}
\label{cor_Gallai}
    Let $L$ be a finite set of lines through the origin in $\mathbb{R}^d$, with $\abs{L} \geq 3$. Suppose that no plane contains exactly two elements of $L$. Then there exists a plane containing all the lines in $L$.
\end{cor}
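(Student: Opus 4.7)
The plan is to reduce the statement to the Sylvester–Gallai theorem by slicing all of the lines in $L$ by a generic affine hyperplane, thus trading coplanarity of lines through the origin for collinearity of points in that hyperplane. Since $L$ is finite, there is some unit vector $v \in \mathbb{R}^d$ that is not orthogonal to any line in $L$; with $H \coloneqq \{ x \in \mathbb{R}^d : \langle x,v \rangle = 1\}$, each line $\ell \in L$ meets $H$ in exactly one point, giving a finite set $S \subset H$ in bijection with $L$. (If $d \leq 2$, the conclusion is vacuous, so I may assume $d \geq 3$ and hence $\dim H = d-1 \geq 2$.)

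The key dictionary is this: three lines $\ell_1, \ell_2, \ell_3 \in L$ lie in a common $2$-dimensional subspace $P$ of $\mathbb{R}^d$ if and only if the three corresponding points of $S$ are collinear inside $H$, the common line being $P \cap H$. Hence the hypothesis that no plane through the origin contains exactly two lines of $L$ translates into the Sylvester–Gallai hypothesis: every line in $H$ that passes through two points of $S$ must also contain a third. I would then invoke the higher-dimensional Sylvester–Gallai statement — a finite set of points in any real affine space with this property is collinear — and conclude that all points of $S$ lie on a single line $\ell \subset H$, so that the $2$-dimensional subspace $P$ spanned by $\ell$ and the origin contains every line of $L$.

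The only step that is not bookkeeping is the higher-dimensional Sylvester–Gallai statement, which is not quite Theorem \ref{thm_syl_gallai} on the nose. I would deduce it from Theorem \ref{thm_syl_gallai} by a one-line argument: if $S$ is not collinear, pick three non-collinear points of $S$ and consider the $2$-dimensional affine plane $\Pi$ that they span; since any line through two points of $S \cap \Pi$ is a line in $\mathbb{R}^{d-1}$ through two points of $S$, it contains a third point of $S$, which lies in $\Pi$ as well, so $S \cap \Pi$ satisfies the hypothesis of Theorem \ref{thm_syl_gallai} and is therefore collinear — contradicting the choice of the three non-collinear points. This reduction is the only genuinely nontrivial step; the rest is choosing $H$ carefully and translating the language of lines-through-the-origin into the language of points-on-$H$.
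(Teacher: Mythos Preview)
Your approach is essentially identical to the paper's: intersect the lines of $L$ with a generic affine hyperplane not through the origin and apply Sylvester--Gallai to the resulting point set. You are in fact slightly more careful than the paper, which applies Theorem~\ref{thm_syl_gallai} (stated only for planar point sets) directly to a set of points lying in a $(d-1)$-dimensional hyperplane without comment, whereas you explicitly supply the easy reduction to the planar case.
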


\begin{proof}
    Since $L$ is finite, we can find some hyperplane $H$ that does not contain the origin, and intersects every line from $L$. Let $L = \{l_1, \dots, l_t\}$, let $x_i=l_i \cap H$, and let $S = \{x_1, \dots, x_t\}$. Let $l^*$ be the line through $x_i$ and $x_j$, for some $i \neq j$. Using the defining property of $L$, we deduce that the plane containing $l_i$ and $l_j$ must also contain some other line from $L$, say $l_k$. But then $x_k$ belongs to $l^*$, so any line containing two points of $S$ also contains a third one. Theorem \ref{thm_syl_gallai} implies that all the $x_i$ are collinear, which means that all the $l_i$ belong to a single plane.
\end{proof}

\begin{proof}[Proof of Theorem \ref{thm_even_general}.]
    If (\ref{condition1_even}) does not hold for any $x$, then Observation \ref{obs_even_general} implies that Bob wins.

    Now suppose that (\ref{condition1_even}) holds, and the perfect hyperplanes are concurrent at the origin. Let $\abs{S}=2n$. Alice chooses $a$ to be the origin. We would like to show that she wins. Using Theorem \ref{general_easy}, we are done if any hyperplane through the origin contains at most $n$ points of $S$ on either (strict) side. Assume this is not true. Without loss of generality, assume that $x_1>0$ holds for at least $n+1$ points of $S$. By changing our coordinate system if necessary, we can further assume that no two points of $S$ agree in both the first and second coordinates.  Using Lemma \ref{even_perfect_strong}, and the fact that every perfect hyperplane contains the origin, we deduce that any point of $S$ belongs to some perfect hyperplane of the form $\{x \in \mathbb{R}^d : \lambda x_1-\mu x_2=0\}$, for some $(\lambda,\mu) \in \mathbb{R}^2 \setminus \{(0,0)\}$. But now we can reduce the problem to its two-dimensional version, by projecting everything onto the plane $K = \textrm{span}\{e_1,e_2\}$, just like we did in the proof of Lemma \ref{even_perfect_strong}. Since $x_1>0$ holds for at least $n+1$ points of $S$, Theorem \ref{general_easy} implies that the origin is not a winning point for Alice in the two-dimensional game on $K$. Therefore, using Theorem \ref{thm_even_2d}, we deduce that there exists some perfect line on $K$ that does not contain the origin, and consequently there exists a perfect hyperplane in $\mathbb{R}^d$ that does not contain the origin. This gives a contradiction, so Alice wins.

    Suppose now that $x$ satisfies (\ref{condition2_even}), and Alice chooses $a=x$. Whatever the choice of $b$ is, Alice claims at least half the points of $S$ on any even-balanced line (excluding $x$). If $x \notin S$, then she wins. If $x \in S$, then there is an odd number of odd-balanced lines. Let $2N+1$ be the total number of points of $S$ on the odd-balanced lines, excluding $x$. By considering the projection of $b$ onto $P$, and looking at the proof of Proposition \ref{prop_even}, we deduce that Alice claims at least $N$ of those points, and therefore claims at least half of the points in total (as she also claims $x$), so she wins.

    Now suppose that no point satisfies (\ref{condition2_even}), and Alice chooses $a=x$. If there exists any non-balanced line $l$ through $x$, then we can assume that $\abs{l^+ \cap S} \geq \abs{l^- \cap S} + 2$. We can find some hyperplane $H$ that contains $l$, such that $(H \setminus l) \cap S = \emptyset$. Suppose that $H^+$ contains at least as many points of $S$ as $H^-$ does. Applying Lemma \ref{lem_at_least_half}, with $K=l$, shows that Bob can claim all the points of $S$ in $l^+ \cup H^+$, which guarantees more than half of the votes for him. 
    
    Otherwise, every line through $x$ is balanced. We can assume that there are at least two odd-balanced lines, as $x$ would trivially satisfy (\ref{condition2_even}) in a different case. If the odd-balanced lines are not coplanar, then Corollary \ref{cor_Gallai} implies that there exists some plane $Q$ that contains precisely two odd-balanced lines. Then we can find a line through $x$ on $Q$, containing no other points of $S$, that divides it into the half-planes $Q^+$ and $Q^-$, such that $\abs{Q^+ \cap S} = \abs{Q^- \cap S} + 2$. We can find some hyperplane $H$ containing $Q$, and no points of $S$ outside of $Q$. Suppose that $\abs{H^+ \cap S} \geq \abs{H^- \cap S}$. Applying Lemma \ref{lem_at_least_half}, with $K=Q$, shows that Bob can claim all the points of $S$ in $Q^+ \cup H^+$, which guarantees more than half of the votes for him.
    
    Finally, if the odd-balanced lines all belong to a single plane $P$, but (\ref{condition2_even}) is not satisfied, then we can assume (following the notation in (\ref{condition2_even})), without loss of generality, that $\abs{l_1^+ \cap S} = \abs{l_1^- \cap S} + 1$, and $\abs{l_2^+ \cap S} = \abs{l_2^- \cap S} + 1$. We can further assume that $l_3^+ \cup \dots \cup l_k^+$ contains at least as many points of $S$ as $l_3^- \cup \dots \cup l_k^-$ does. Then we can find a line through $x$ on $P$, containing no other points of $S$, that divides it into the half-planes $P^+$ and $P^-$, such that $P^+$ contains $l_i^+$ for all $i$. Then $\abs{P^+ \cap S} \geq \abs{P^- \cap S}+2$. We can find some hyperplane $H$ containing $P$ and no points of $S$ outside of $P$. Since every line through $x$ that does not belong to $P$ is even-balanced, we have $\abs{H^+ \cap S} = \abs{H^- \cap S}$. Applying Lemma \ref{lem_at_least_half}, with $K=P$, shows that Bob can claim all the points of $S$ in $P^+ \cup H^+$, which guarantees more than half of the votes for him.
\end{proof}

\subsection{How `big' is the set of Alice wins?}

Let $n,d$ be fixed positive integers. Consider the metric space
$$M = M_{n,d} \coloneqq \{ (x_1, \dots, x_n) \in (\mathbb{R}^d)^n: \text{all the} \ x_i \ \text{are distinct}\}$$
endowed with the usual Euclidean metric $\rho$; this is also a measure space, inheriting the Lebesgue measure from $(\mathbb{R}^d)^n$. Let
$$A = A_{n,d} \coloneqq \{(x_1, \dots, x_n) \in M_{n,d} : \{x_1, \dots, x_n\} \text{ is an Alice win}\}$$
denote the set of Alice wins\footnote{Or, to be pedantic, the set of tuples corresponding to Alice wins}, and let $B = B_{n,d} \coloneqq M_{n,d} \setminus A_{n,d}$ be the set of Bob wins. We are interested in determining how `big' $A$ is, as a subset of $M$. It is easy to see that, if $S$ is a Bob win, then there exists $\epsilon>0$ such that, if we replace each point $x$ of $S$ by another point $x'$ at distance at most $\epsilon$ from $x$ (where possibly $x'=x$), the resulting set is still a Bob win. Therefore, $B$ is an open subset of $M$.

Since Alice always wins when $d=1$, we have $A_{n,1} = M_{n,1}$ for all $n$. Alice also wins when $n \leq 2$, so $A_{1,d} = M_{1,d}$, and $A_{2,d} = M_{2,d}$, for all $d$. Moreover, when $d=2$ and $n=4$, Alice always wins. Indeed, to see this, let $S=\{x_1, \dots, x_4\}$ be a set of four points in the plane. If any three points are collinear, then there exist distinct $i_1,i_2,i_3$ such that $x_{i_3}$ belongs to the interior of the line segment $x_{i_1}x_{i_2}$. Then Alice wins by choosing $a=x_{i_3}$. If $x_{i_1},x_{i_2},x_{i_3}$ form a triangle, and $x_{i_4}$ lies in the interior of this triangle, then Alice wins by choosing $a=x_{i_4}$, as every perfect line contains this point, so condition (\ref{condition1_even}) of Theorem \ref{thm_even_general} is satisfied. Finally, if the four points form a convex quadrilateral, $x_{i_1}x_{i_2}x_{i_3}x_{i_4}$ say, then Alice wins by choosing $a$ to be the point of intersection of the lines $x_{i_1}x_{i_3}$ and $x_{i_2}x_{i_4}$, since every perfect line contains this point. Therefore, $A_{4,2} = M_{4,2}$.

Now let $d=2$ and $n\geq6$ even. Let $x_1, \dots, x_{n-1}$ be the vertices of a regular polygon, and let $x_n$ be the center of this polygon. This is a winning set for Alice, because $x_n$ satisfies condition (\ref{condition2_even}) of Theorem \ref{thm_even_general}. It is clear that there exists $\epsilon>0$ such that if $\rho(x_i,x_i') < \epsilon$ for each $i$, then replacing $x_i$ with $x_i'$ (for each $i$) still results in an Alice win. Therefore, $A$ contains a non-empty open set. The set of Bob wins is also non-empty in this case: for example, let $x_1, \dots, x_n$ lie on a circle $C$ in a clockwise order, such that they form the vertices of a regular polygon. We replace $x_1$ by some $x_1'$, such that $x_1'$ is very close to $x_1$, and still belongs to $C$. For $i=2, \dots, \frac{n}{2}$, the perfect line $x_ix_{\frac{n}{2}+i}$ passes through the center $O$ of $C$. However, $x_1'x_{\frac{n}{2}+1}$ does not pass through $O$, which implies that Bob wins. We observed above that $B$ is always an open set. Therefore, when $n \geq 6$ is even, both $A_{n,2}$ and $B_{n,2}$ contain a non-empty open set, and therefore have positive measure (in fact infinite measure, as they are translation-invariant).

A finite set $S \subset \mathbb{R}^d$ is in \emph{general position} if, for $i=1, \dots, d-1$, any affine subspace of dimension $i$ contains at most $i+1$ points of $S$. We will now show that, for any other pair $(n,d)$, the set $A_{n,d}$ contains no sets of points in general position, which implies that it is nowhere dense in $M_{n,d}$.

\begin{lemma}
\label{lem_general_position}
     Let $\abs{S} \geq 3$. If $d=2$ and $\abs{S}$ is odd, or $d \geq 3$, then any $S$ in general position is a Bob win.
\end{lemma}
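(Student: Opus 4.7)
The plan is to invoke Theorems~\ref{thm_odd_general} and~\ref{thm_even_general} and show that, under general position, no point of $\mathbb{R}^d$ can satisfy the winning condition stated there. Two features of general position will do all the work: the $i=1$ clause forbids three collinear points of $S$, and (when $d \geq 3$) the $i=2$ clause forbids four coplanar points of $S$. The proof amounts to pitting these bounds against the concurrences demanded by the winning conditions.

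First I would dispose of the odd case, which covers both $d = 2$ with $|S|$ odd and $d \geq 3$ with $|S|$ odd. By Theorem~\ref{thm_odd_general} it suffices to show that no $x \in S$ satisfies condition~(\ref{condition3_odd}). Fix any candidate $x \in S$ and pick $y \in S \setminus \{x\}$, which is possible because $|S| \geq 3$. The line $\ell$ through $x$ and $y$ contains no further point of $S$ by the no-three-collinear clause, so $|(\ell \setminus x) \cap S| = 1$, making $\ell$ odd-balanced about $x$ rather than even-balanced. Hence $x$ fails condition~(\ref{condition3_odd}), and Bob wins.

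For the even case with $d \geq 3$ (so $|S| \geq 4$) I would test condition~(\ref{condition2_even}) of Theorem~\ref{thm_even_general} at every candidate $x \in \mathbb{R}^d$. If $x \in S$, then for each of the $|S|-1 \geq 3$ points $y \in S \setminus \{x\}$ the line $xy$ is odd-balanced by the same no-three-collinear argument; condition~(\ref{condition2_even}) requires all such lines to be coplanar, but coplanarity of three of them together with $x$ would force four points of $S$ into a single plane, violating the $i=2$ clause. If $x \notin S$, then condition~(\ref{condition2_even}) forbids odd-balanced lines through $x$, so for every $y \in S$ the line $xy$ must contain an even, positive, and at-most-two number of points of $S$---hence exactly one further point $y' \in S$---with $x$ strictly between $y$ and $y'$ by the balance requirement. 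This pairs $S$ up; picking two distinct pairs gives two distinct lines through $x$ spanning a plane that contains four points of $S$, again violating general position. The main obstacle is really this last sub-case: extracting the pairing of $S$ from the ``no odd-balanced lines'' hypothesis by combining the $i=1$ bound with the parity and balance constraints, before collapsing it with the $i=2$ bound. The remaining cases follow immediately from the no-three-collinear property alone.
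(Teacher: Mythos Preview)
Your proposal is correct and follows essentially the same approach as the paper: both proofs invoke Theorems~\ref{thm_odd_general} and~\ref{thm_even_general} and show that the respective winning conditions would force either three collinear or four coplanar points of $S$, contradicting general position. The only cosmetic difference is that the paper, in the even $x\in S$ sub-case, first disposes of the possibility of a non-trivial even-balanced line before concluding that all lines through $x$ to other points of $S$ are odd-balanced, whereas you obtain this directly from the no-three-collinear clause; the two arguments are equivalent.
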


\begin{proof}
    Let $d \geq 2$, and $\abs{S}$ odd. Suppose that Alice can win by choosing $a=x$, for some $x \in S$. Then, using condition (\ref{condition3_odd}) from Theorem \ref{thm_odd_general}, along with the fact that $S$ contains at least three points, we deduce that there exists some line that contains $x$ and at least two other points of $S$, which implies that $S$ is not in general position.

    Now let $d \geq 3$, and $\abs{S}$ even. Suppose that Alice can win by choosing $a=x$, for some $x \notin S$. Condition (\ref{condition2_even}) from Theorem \ref{thm_even_general} implies that every line through $x$ is even-balanced. If all the points of $S$ belong to a single line, then $S$ is not in general position, as it has size at least four. If $l_1$ and $l_2$ are two distinct even-balanced lines, each containing at least two points of $S$, then the plane containing $l_1$ and $l_2$ contains at least four points of $S$, which implies that $S$ is not in general position. Finally, suppose that Alice can win by choosing $a=x$, for some $x \in S$. If there is any non-trivial even-balanced line through $x$, then that line contains at least three points of $S$, and therefore $S$ is not in general position. If every line that contains $x$ and at least one other point of $S$ is odd-balanced, then condition (\ref{condition2_even}) of Theorem \ref{thm_even_general} implies that all the points of $S$ belong to a single plane. Since $\abs{S} \geq 4$, $S$ is not in general position.
\end{proof}

\begin{cor}
    Let $n \geq 3$. If $d=2$ and $n$ is odd, or $d \geq 3$, then $A_{n,d}$ is nowhere dense in $M_{n,d}$.
\end{cor}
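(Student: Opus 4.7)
The plan is to combine Lemma \ref{lem_general_position} with the openness of $B_{n,d}$. Since $B_{n,d}$ was observed to be open in $M_{n,d}$, its complement $A_{n,d}$ is closed, and so $A_{n,d}$ is nowhere dense if and only if $A_{n,d}$ has empty interior, i.e.\ if and only if $B_{n,d}$ is dense in $M_{n,d}$. Hence the whole task reduces to showing that every tuple in $M_{n,d}$ can be approximated arbitrarily well (in the Euclidean metric) by tuples that are Bob wins.

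To achieve that, I would argue that the set
\[
G_{n,d} \coloneqq \{(x_1,\dots,x_n) \in M_{n,d} : \{x_1,\dots,x_n\}\ \text{is in general position}\}
\]
is a dense subset of $M_{n,d}$. Indeed, the complement of $G_{n,d}$ in $M_{n,d}$ is the union, over $i \in \{1,\dots,d-1\}$ and over all choices of $i+2$ distinct indices from $\{1,\dots,n\}$, of the set of tuples for which the corresponding $i+2$ points lie in a common $i$-dimensional affine subspace. Each such constraint is expressible by the simultaneous vanishing of certain $(i+1)\times(i+1)$ determinants in the coordinates, and hence defines a proper real algebraic subvariety of $(\mathbb{R}^d)^n$ (it is a proper subvariety because, for any $i \leq d-1$, one can exhibit $i+2$ points of $\mathbb{R}^d$ that are affinely independent). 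A proper real algebraic subvariety has empty interior, so is nowhere dense; a finite union of nowhere dense sets is nowhere dense, which gives $G_{n,d}$ dense in $M_{n,d}$.

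Once density of $G_{n,d}$ is established, Lemma \ref{lem_general_position} applies under the hypotheses of the corollary (namely $d=2$ with $n$ odd, or $d\geq 3$, and $n\geq 3$) to give $G_{n,d} \subseteq B_{n,d}$. Consequently $B_{n,d}$ contains a dense subset of $M_{n,d}$, hence is itself dense, and $A_{n,d}$ is nowhere dense in $M_{n,d}$.

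The only step requiring any care is the verification that the `not in general position' locus is a finite union of proper algebraic subvarieties; this is routine, but one should mention why each such subvariety is proper (one can always perturb any configuration into an affinely independent one in dimension $d$, provided $i+2 \leq d+1$, which holds since $i \leq d-1$). Everything else is a direct consequence of Lemma \ref{lem_general_position} and the previously noted openness of $B_{n,d}$.
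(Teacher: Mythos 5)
Your proposal is correct and follows essentially the same route as the paper: both deduce $G_{n,d} \subseteq B_{n,d}$ from Lemma \ref{lem_general_position} and conclude from the density (and openness) of the general-position locus that $A_{n,d}$ is nowhere dense. The only difference is that you spell out the standard algebraic-subvariety argument for why $G_{n,d}$ is dense, which the paper simply asserts.
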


\begin{proof}
    Let $G = G_{n,d} \coloneqq \{ (x_1, \dots, x_n) \in M_{n,d}: \{x_1, \dots, x_n\} \text{ is in general position}\}$. Using Lemma \ref{lem_general_position}, we deduce that $G \subset B$. Since $G$ is an open dense subset of $M$, the complement of $B$, which is $A$, is nowhere dense in $M$.
\end{proof}

\section{Algorithms for determining the winning point for Alice}

Let $f$ and $g$ be functions taking values in the non-negative real numbers. We write $f(x) = O(g(x))$ if there exists some positive constant $C$ such that $f(x) \leq Cg(x)$ for all $x$.

We will now provide polynomial time algorithms that will allow us to check whether there exists a winning point for Alice, and identify the unique winning point when it exists. Below we will illustrate how this can be achieved using the condition on concurrent lines, as this condition seems to give the most efficient algorithm. However, it is also possible to use the conditions on good/perfect hyperplanes. As an example, we briefly explain how condition (\ref{condition1_odd}) of Theorem \ref{thm_odd_general} can be checked in finite time, for some $x \in S$. Without loss of generality, let $x$ be the origin, and let $x_1,\dots,x_{n-1}$ be all the points of $S$ other than the origin. For $i=1,\dots,n-1$, let $H_i = \{x \in \mathbb{R}^d: \langle x,x_i \rangle=0\}$. This gives a collection of (at most) $n-1$ hyperplanes through the origin, which divide $\mathbb{R}^d$ into a finite collection of open regions, say $R$. We observe that, if two hyperplanes $H$ and $H'$ through the origin have their normal vectors in the same region from $R$, then $H$ is good if and only if $H'$ is good. Therefore, we only need to check condition (\ref{condition1_odd}) for $\abs{R}$ hyperplanes. Using induction on both $d$ and the size of $S$, we can show that $\abs{R} = O(\abs{S}^d)$. This is a polynomial bound for fixed $d$, but we will now show how to get a polynomial bound that is independent of $d$.

\subsection{Odd number of voters}

When $\abs{S}$ is odd, we will check condition (\ref{condition3_odd}) of Theorem \ref{thm_odd_general} for every point of $S$. To check whether a point $x \in S$ satisfies this condition, we simply need to identify the set $L$ of lines that contain $x$ and at least one other point of $S$, and check if every line in $L$ contains an equal number of points of $S$ on either side of $x$. Since there are $\abs{S}$ points to check, and at most $\abs{S}$ lines to check for each point (in fact, there are at most $\abs{S}/2$ lines to check, as we immediately stop if we find a line containing only one additional point of $S$), this algorithm runs in time $O(\abs{S}^2)$.

\subsection{Even number of voters}

When $\abs{S}$ is even, we will check condition (\ref{condition2_even}) of Theorem \ref{thm_even_general}. Let $L$ be the set of lines that contain at least two points of $S$. We can assume that $\abs{L}>1$, since $\abs{L}=1$ corresponds to the one-dimensional case. Using condition (\ref{condition2_even}), we see that, if there exists a winning point $x$ for Alice, then it has to be either a point of $S$, in which case there exist some odd-balanced lines about $x$, or a point of intersection of two distinct lines from $L$. In the latter case, the two lines are non-trivial even-balanced lines about $x$. Since $\abs{L} \leq \binom{\abs{S}}{2}$, the number of potential winning points for Alice is $O(\abs{S}^4)$.

When dealing with a point $x \notin S$, we just repeat what we did in the odd case: we check that, for any line $l$ containing $x$ and at least one point of $S$, we have $\abs{l^+ \cap S} = \abs{l^- \cap S}$. As noted above, we need to check at most $\abs{S}/2$ lines.

When dealing with a point $x \in S$, we first check that, for any line $l$ containing $x$ and at least one other point of $S$, we have $\abs{\abs{l^+ \cap S} - \abs{l^- \cap S}} \leq 1$. Every time we come across an odd-balanced line, we also need to ensure that these lines still belong to a single plane. Finally, we check that the odd-balanced lines alternate in the way described in condition (\ref{condition2_even}). This can be done in time $O(\abs{S})$, and the entire process runs in time $O(\abs{S}^5)$.

\begin{remark}
\label{rem_rado}

We have shown that, provided $d > 2$ (or $d=2$ and $|S|$ is odd), Bob wins in `most' cases, in the sense that the set of Alice wins is nowhere dense. It is natural to ask for the maximal proportion $\alpha = \alpha(d) \in [0,1]$ such that, in the game on $\mathbb{R}^d$, Alice can always guarantee (with a suitable strategy) that she wins at least a proportion $\alpha$ of the votes. (Here, we insist on the rule that $b \neq a$, as if $b=a$ then neither player would obtain any votes, so we must rule out this `trivial' case in order for the question to be interesting.) It turns out that this is not too hard to answer, and in fact $\alpha(d) = 1/(d+1)$ for all $d \in \mathbb{N}$. We proceed to outline a proof of this, due to the author, David Ellis and Robert Johnson. We first provide an example where Alice cannot get more than $\frac{1}{d+1}$ of the votes. This construction also appears in \cite{regular_simplex} (proof of Theorem 4). We take $\abs{S}$ to consist of the $d+1$ vertices of a regular $d$-dimensional simplex in $\mathbb{R}^d$. In this case, whatever the choice of $a$ is, Bob can claim at least $d$ points by choosing his point to be the projection of $a$ onto some face of the regular simplex. Therefore, Alice cannot claim more than $\frac{\abs{S}}{d+1}$ points in general. (It is easy to see that this can also happen for arbitrarily large sets $S$, by placing $\lambda$ points in a small ball centered at each vertex of the regular $d$-dimensional simplex, for any $\lambda \in \mathbb{N}$.) We will now show that Alice can always claim at least $\frac{\abs{S}}{d+1}$ points, by applying the following result of Rado \cite{rado}.

\begin{theorem}
\label{thm_rado}
    Let $\mathcal{F}$ be a $\sigma$-algebra on $\mathbb{R}^d$, such that, for all $c \in \mathbb{R}^d$ and $t \in \mathbb{R}$, $\{ x \in \mathbb{R}^d : \langle x,c \rangle \leq t \} \in \mathcal{F}$. Let $(\mathbb{R}^d, \mathcal{F}, \mu)$ be a measure space, such that, for any $\epsilon > 0$, there exists some bounded set $T$ satisfying $\mu(\mathbb{R}^d \setminus T) < \epsilon$. Then there exists $a \in \mathbb{R}^d$, such that, for any $c \in \mathbb{R}^d$ and $t < \langle a,c \rangle$, $\mu(\{x \in \mathbb{R}^d: \langle x,c \rangle>t \}) \geq \frac{\mu(\mathbb{R}^d)}{d+1}$.
\end{theorem}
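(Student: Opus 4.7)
The plan is to reformulate the existence of the centerpoint $a$ as the non-emptiness of the intersection of an infinite family of closed half-spaces, and then to deduce this via Helly's theorem together with a compactness argument based on the tightness hypothesis on $\mu$. Write $M := \mu(\mathbb{R}^d)$ (which I take to be finite; otherwise rescale or restrict). The desired conclusion is equivalent, by contrapositive, to the statement that $a$ lies in every closed half-space $K$ with $\mu(K) > \tfrac{dM}{d+1}$: indeed, an open half-space $U=\{x: \langle x,c\rangle > t\}$ contains $a$ iff $\langle a,c\rangle > t$, and $\mu(U) < \tfrac{M}{d+1}$ iff its complement $K = U^c$ (a closed half-space) satisfies $\mu(K) > \tfrac{dM}{d+1}$. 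Writing $\mathcal{F}$ for the family of all closed half-spaces of $\mu$-measure exceeding $\tfrac{dM}{d+1}$, it therefore suffices to prove that $\bigcap \mathcal{F} \neq \emptyset$.

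The first ingredient is the finite intersection property for $\mathcal{F}$. The key observation is that any $d+1$ members $K_1,\dots,K_{d+1} \in \mathcal{F}$ satisfy $\mu(K_i^c) < \tfrac{M}{d+1}$, so a union bound gives
$$\mu\!\left(\bigcap_{i=1}^{d+1} K_i\right) \;\geq\; M - \sum_{i=1}^{d+1} \mu(K_i^c) \;>\; 0,$$
and in particular the intersection is non-empty. Finite Helly then upgrades this to: every finite subfamily of $\mathcal{F}$ has a common point.

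The main obstacle is passing from the finite intersection property to non-emptiness of the intersection of \emph{all} of $\mathcal{F}$, since the half-spaces in $\mathcal{F}$ are unbounded and the infinite Helly theorem requires compactness. This is precisely where the tightness hypothesis enters. For each $\delta>0$, set $\mathcal{F}_\delta := \{K \in \mathcal{F}: \mu(K) \geq \tfrac{dM}{d+1}+\delta\}$, pick $\epsilon < (d+1)\delta$, and choose a closed ball $B$ with $\mu(\mathbb{R}^d\setminus B)<\epsilon$. A slightly quantitative version of the union bound above gives $\mu\bigl((\bigcap_{i=1}^{d+1} K_i) \cap B\bigr) \geq (d+1)\delta - \epsilon > 0$ for any $K_1,\dots,K_{d+1} \in \mathcal{F}_\delta$, so any $d+1$ of the compact convex sets $\{K \cap B: K \in \mathcal{F}_\delta\}$ have non-empty intersection; finite Helly upgrades this to the finite intersection property for this family of compact sets, and compactness of $B$ then yields $\bigcap_{K \in \mathcal{F}_\delta}(K \cap B) \neq \emptyset$. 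Finally, taking a sequence $\delta_n \downarrow 0$ produces a nested decreasing sequence of non-empty compact subsets of $B$, whose intersection is contained in $\bigcap \mathcal{F}$; any point in this final intersection serves as the desired centerpoint $a$.
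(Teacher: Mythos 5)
First, a point of comparison: the paper does not prove Theorem \ref{thm_rado} at all --- it is quoted as a known result of Rado \cite{rado} and used as a black box --- so there is no internal proof to measure you against. Your Helly-plus-compactness route is the standard way to prove such centerpoint statements, and most of it checks out: the reformulation (a valid $a$ is exactly a point lying in every closed half-space $K$ with $\mu(K)>\tfrac{dM}{d+1}$), the union bound, finite Helly, and the per-$\delta$ compactness step are all correct. Two small caveats you should make explicit: a closed ball is a countable intersection of closed half-spaces (take a countable dense set of directions), hence measurable, which is needed since the $\sigma$-algebra is only assumed to contain half-spaces; and your standing assumption $M<\infty$ is not literally forced by the hypotheses (a bounded set may carry infinite mass), and ``rescale'' does not dispose of $M=\infty$ --- though that case is irrelevant for the application in the paper. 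Also note your family of heavy half-spaces is unfortunately also called $\mathcal{F}$, clashing with the $\sigma$-algebra of the statement.

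The genuine gap is the final sentence. The ball $B$ depends on $\delta$: for $\delta_n\downarrow 0$ you must take $\epsilon_n<(d+1)\delta_n$ and hence, in general, larger and larger balls $B_n$. The sets $C_n:=\bigcap_{K\in\mathcal{F}_{\delta_n}}(K\cap B_n)$ are then \emph{not} nested --- enlarging $\mathcal{F}_{\delta}$ shrinks the half-space part, but enlarging the ball grows the other part --- so ``a nested decreasing sequence of non-empty compact subsets of $B$'' is unjustified; and if instead you keep a single ball $B_1$, your positivity bound $(d+1)\delta_n-\epsilon_1$ becomes negative for small $\delta_n$, so non-emptiness of $\bigcap_{K\in\mathcal{F}_{\delta_n}}(K\cap B_1)$ is not established. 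The missing observation, which repairs everything, is that the near-optimal intersections are automatically confined to a fixed compact set: fix a closed ball $B_0$ with $\mu(B_0)>\tfrac{dM}{d+1}$ (tightness; the case $M=0$ is trivial). Every closed half-space containing $B_0$ has measure at least $\mu(B_0)$, hence belongs to $\mathcal{F}_\delta$ whenever $\delta\le\mu(B_0)-\tfrac{dM}{d+1}$; since $B_0$ is the intersection of the closed half-spaces containing it, $E_\delta:=\bigcap_{K\in\mathcal{F}_\delta}K\subseteq B_0$ for all such $\delta$. Now the $E_{\delta_n}$ (for $n$ large) are non-empty by your argument, closed, contained in the compact set $B_0$, and genuinely nested, and their intersection equals $\bigcap_{K\in\mathcal{F}}K$, as required. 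Alternatively you can dispense with $\delta$ altogether: take a cube $Q\supseteq B_0$, the intersection of $2d$ closed half-spaces each containing $B_0$ and hence each in $\mathcal{F}$; by your union bound and finite Helly every finite subfamily of $\mathcal{F}$ has a common point, so every finite intersection $K_1\cap\dots\cap K_m\cap Q$ is non-empty, and compactness of $Q$ yields $\bigcap_{K\in\mathcal{F}}(K\cap Q)\ne\emptyset$ directly.
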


\begin{theorem}
    Given any finite set $S$ in $\mathbb{R}^d$, Alice can always claim at least $\frac{\abs{S}}{d+1}$ points.
\end{theorem}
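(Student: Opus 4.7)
The plan is to apply Theorem \ref{thm_rado} to the counting measure on $S$ and let Alice play the point it produces. Define $\mu \coloneqq \sum_{x \in S}\delta_x$ on $\mathbb{R}^d$, equipped with (say) the Borel $\sigma$-algebra $\mathcal{F}$, so that every set of the form $\{x \in \mathbb{R}^d: \langle x,c\rangle \leq t\}$ lies in $\mathcal{F}$. Then $\mu(\mathbb{R}^d) = |S|$ is finite and $\mu$ is supported on the bounded set $S$, so both hypotheses of Theorem \ref{thm_rado} are met. Applying it yields a point $a \in \mathbb{R}^d$ such that, for every $c \in \mathbb{R}^d$ and every $t < \langle a,c\rangle$,
\[
\mu\bigl(\{x \in \mathbb{R}^d: \langle x,c\rangle > t\}\bigr) \ \geq\ \frac{|S|}{d+1}.
\]

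I would then have Alice choose this $a$ as her point. Suppose Bob plays an arbitrary $b \neq a$. A standard expansion shows that a voter $x$ is strictly closer to $a$ than to $b$ precisely when $\langle x, a-b\rangle > \tfrac{1}{2}(\|a\|^2 - \|b\|^2)$. Setting $c \coloneqq a-b$ and $t \coloneqq \tfrac{1}{2}(\|a\|^2 - \|b\|^2)$, the set of voters claimed by Alice is exactly $\{x \in S: \langle x,c\rangle > t\}$, and its cardinality equals $\mu(\{x \in \mathbb{R}^d: \langle x,c\rangle > t\})$ because $\mu$ is supported on $S$.

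To invoke Rado's conclusion I need $t < \langle a,c\rangle$. But $\langle a,c\rangle - t = \langle a, a-b\rangle - \tfrac{1}{2}(\|a\|^2 - \|b\|^2) = \tfrac{1}{2}\|a-b\|^2 > 0$, since $b \neq a$. Hence $V_A \geq |S|/(d+1)$, giving the desired lower bound for every response of Bob.

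The proof is short and the only genuine content is Rado's theorem itself; the rest amounts to identifying the perpendicular bisector half-space with the half-space appearing in the statement of Theorem \ref{thm_rado}. The only mild subtlety worth double-checking is the strict inequality $t < \langle a,c\rangle$ (so that Rado's estimate applies) together with the fact that the half-space is \emph{open} (so that the count of voters matches $\mu$ of that half-space); both follow immediately from $a \neq b$ and from $\mu$ being atomic on $S$.
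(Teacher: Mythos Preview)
Your proposal is correct and is essentially the same argument as the paper's: apply Rado's theorem to the counting measure $\mu(T)=|T\cap S|$, let Alice play the resulting point $a$, and observe that for any $b\neq a$ the half-space on Alice's side of the perpendicular bisector has $\mu$-measure at least $|S|/(d+1)$. The only cosmetic difference is that the paper uses the (smaller) closed half-space bounded by the hyperplane through $a$ perpendicular to $ab$, whereas you use the exact perpendicular-bisector half-space and verify the strict inequality $t<\langle a,c\rangle$ directly; both work.
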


\begin{proof}
    Let $\mu$ be the measure on all subsets of $\mathbb{R}^d$ defined as follows: for any $T \subseteq \mathbb{R}^d$, $\mu(T) = |T \cap S|$. Note that $S$ is bounded, and $\mu (\mathbb{R}^d \setminus S) = 0$, so we can apply Theorem \ref{thm_rado}, which implies that Alice can pick a point $a$ such that any closed half-space defined by some hyperplane through $a$ has measure at least $\frac{\mu({\mathbb{R}^d})}{d+1}$, or equivalently contains at least $\frac{\abs{S}}{d+1}$ points of $S$. Whatever the choice of $b$ is, we can think of $a$ as the projection of $b$ onto some hyperplane $H$ through $a$. Then the closed half-space defined by $H$ on the opposite side of $b$ contains at least $\frac{\abs{S}}{d+1}$ points of $S$, and Alice claims all of them.
\end{proof}

\end{remark}

\section*{Acknowledgments}
The author would like to thank both David Ellis and Robert Johnson for suggesting some of the problems considered here, for useful discussions and comments, and for allowing us to include Remark \ref{rem_rado}.

\bibliographystyle{plain}
\bibliography{references.bib}

\end{sloppypar}

\end{document}